\numberwithin{equation}{section}
\title[On the speed of convergence in the ergodic theorem for shift operators]{On the speed of convergence in the ergodic \\theorem for shift operators}
\date{}
\DeclareMathOperator*{\SPAN}{span}
\DeclareMathOperator{\dist}{dist}
\DeclareMathOperator{\GL}{GL}
\DeclareMathOperator{\tr}{tr}
\DeclareMathOperator{\supp}{supp}
\newcommand{\bC}{\mathbb C}
\newcommand{\bN}{\mathbb N}
\newcommand{\bR}{\mathbb R}
\newcommand{\bT}{\mathbb T}
\newcommand{\bZ}{\mathbb Z}
\newcommand{\cH}{\mathcal H}
\newcommand{\cV}{\mathcal V}
\newcommand{\cM}{\mathcal M}
\newtheorem{thm}{Theorem}[section]
\newtheorem{lem}[thm]{Lemma}
\newtheorem{prop}[thm]{Proposition}
\newtheorem{cor}[thm]{Corollary}
\theoremstyle{definition}
\theoremstyle{definition}
\newtheorem*{thm*}{Theorem}
\newcommand{\sumN}{\sum_{n=0}^{N-1}}
\author[N. Chalmoukis]{N. Chalmoukis$^2$ \orcidlink{0000-0001-5210-8206}}
\address{\phantom{i}$^1$Dipartimento di Ingegneria Gestionale, dell'Informazione e
della Produzione, Universit{\`a} degli Studi di Bergamo, Viale G. Marconi 5, 24044,
Dalmine BG, Italy}
\email{biancamaria.gariboldi@unibg.it}
\email{alessandro.monguzzi@unibg.it}
\author[L. Colzani]{L. Colzani$^2$}
\address{\phantom{1}$^2$Dipartimento di Matematica e Applicazioni, Universit\`a degli
Studi di Milano Bicocca, Via R. Cozzi 55,  20125, Milano, Italy}
\email{leonardo.colzani@unimib.it}
\email{nikolaos.chalmoukis@unimib.it}
\author[B. Gariboldi]{B. Gariboldi$^{1}$ \orcidlink{0000-0001-8714-4135}}
\author[A. Monguzzi]{A. Monguzzi$^1$ \orcidlink{0000-0003-3233-5000}} 
\thanks{MSC 2020: 47A35, 37A30}
\thanks{All the authors are members of Indam--Gnampa. N. Chalmoukis and A. Monguzzi are partially supported by the Hellenic Foundation for Research and Innovation
(H.F.R.I.) under the “2nd Call for H.F.R.I. Research Projects to support Faculty Members \&
Researchers” (Project Number: 73342) and the Indam--Gnampa project CUP\textunderscore E53C22001930001. A. Monguzzi and B. Gariboldi are supported by the PRIN 2022 project "TIGRECO - TIme-varying signals on Graphs: REal and COmplex methods" funded by the MUR (Ministero dell'Università e della Ricerca), Grant\textunderscore 20227TRY8H, CUP\textunderscore F53D23002630001}
\keywords{ergodic theorem, convergence rate, shift operators, toral endomorphisms}
\begin{document}

\maketitle
\begin{abstract}
    % We consider ergodic means of shift operators and prove results on the speed of convergence of such means to their limit. More precisely, 
    Given a probability space $(X,\mu)$, a square integrable function $f$ on such space and a (unilateral or bilateral) shift operator $T$, we prove under suitable assumptions that the ergodic means $N^{-1}\sumN T^nf$ converge pointwise almost everywhere to zero with a speed of convergence which, up to a small logarithmic transgression, is essentially of the order of $N^{-1/2}$. We also provide a few applications of our results, especially in the case of shifts associated with toral endomorphisms.
\end{abstract}
\section{Introduction and main results}
Let $(X, \mu)$ be a probability space and $T$ a bounded linear operator on the Hilbert space $L^2(X,\mu)$. For $f\in L^2(X,\mu)$ consider its ergodic means 
\[ \frac{1}{N}\sum_{n=1}^{N-1}T^nf(x), \quad N \geq 1, x \in X. \] 
In this paper we study the speed of convergence of such ergodic means when $T$ is a unilateral or bilateral shift operator. Shift operators are sometimes induced by ergodic transformations. Thus, our results also cover some particular instances of von Neumann's \cite{vN} and Birkhoff's \cite{B} ergodic theorems. % In this paper we study the speed of convergence in von Neumann's \cite{vN} and Birkhoff's \cite{B} ergodic theorems for some particular ergodic systems. 
It is well-known that, in full generality, Birkhoff's and von Neumann's theorems are optimal, in the sense that the speed of convergence can indeed be arbitrarily slow, either in norm or in the sense of almost everywhere convergence (see \cite{Krengel, KP}, cf. Theorem \ref{thm:no-speed}). 
Nonetheless, scholars have been intensively investigating such problems from different perspectives and with different goals in mind. To keep track of the literature, as it often happens, is a hard task and here we recall only a few meaningful papers, apologising for the ones we omit. In \cite{FS} Furman and Shalom consider the measure-preserving and ergodic action of a locally compact group acting on a probability space and study the ergodic properties of the action along random walks on $G$. The setting described in \cite{FS} is quite different from ours, however the results obtained are similar in the spirit with the ones we obtain here (cf. \cite[Theorem 1.2]{FS} with Theorem \ref{thm:maximal-unilateral-general}). Kachurovski\u{\i}, Podvigin and coauthors have been studying the problem for the last decades from the spectral theory point of view and we refer the reader to the survey \cite{KPo2}. \begin{comment} and to the more recent works \cite{KLK, KPo3, Po, KPo4}.\end{comment}
In the same spirit of the work of Kachurovski\u{\i} and collaborators we also mention the work \cite{BAB}. Avigad and collaborators investigated the rate of convergence in \cite{AGT, AR, AI} in the sense of \emph{metastability} (see \cite{Tao}). Finally, we mention the work of Das and Yorke \cite{DY}, of Bayart, Buczolich and Heureaux \cite{BBH} and of Colzani, Gariboldi and Monguzzi \cite{C1, CGM}, who all obtain results on the speed of convergence when one considers as transformation the map $x\to x+\alpha$, which is an ergodic transformation of the $d$-dimensional torus $\mathbb T^d=\mathbb R^d/\mathbb Z^d$ whenever $\alpha=(\alpha_1,\ldots,\alpha_d)$ is an irrational vector, that is, whenever $1,\alpha_1,\ldots,\alpha_d$ are linearly independent over $\mathbb Q$. 

In order to provide some context for our results let us focus for a moment on a specific transformation, namely, the doubling map $x\mapsto 2x\mod 1$, which is a well-known ergodic transformation of the one-dimensional torus $\mathbb T$. The sum $\sumN f(2^nx)$ satisfies the central limit theorem and the law of iterated logarithm for a large class of functions. See the work of Fortet \cite{Fortet}, Kac \cite{Kac} and Maruyama \cite{Maruyama}. For subsequent extension of these results we mention, among others, the works of Aistleitner \cite{A1,A2} and refer to the references therein. More in detail, Maruyama, building upon the results of Kac, proved that if $f$ is a continuous function with vanishing mean and satisfying a H\"older condition of order $\alpha>0$, then, for almost every $x$,
\[
\limsup_{N\to+\infty}\frac{1}{\sqrt{2N\log\log(N)}}\sumN f(2^nx)= \lim_{N\to+\infty}\bigg( \frac{1}{N}\int_{\mathbb T} \Big(\sumN f(2^ny)\Big)^2\, dy\bigg)^{\frac12}.
\]
The point of view in the papers we mentioned focuses on the lacunarity of the sequence $\{2^nx\}_{n\in\mathbb N}$ and on the analogy with systems of independent random variables. In this work, instead, we take advantage of the fact that the composition operator $Tf(x)=f(2x)$ is a shift operator on $L^2(\mathbb T, dx)$ (see below for the exact definition).

Before stating our results we briefly recall some definitions following \cite{Nagy}.  Let $\cH$ be a complex separable Hilbert space endowed with the inner product $\langle\cdot,\cdot\rangle$. Let $T:\cH\to \cH$ be an isometry, that is, a bounded linear operator such that
\[
\langle Tf, Tg\rangle=\langle f,g\rangle\qquad \forall f,g\in \cH.
\]
A subspace $\cV\subseteq \cH$ is called a \emph{wandering subspace} for the isometry $T:\cH\to \cH$ if 
\[
T^m (\cV)\perp T^n (\cV)\qquad \forall m,n\in\mathbb N\cup\{0\} , m\neq n.
\]
The isometry $T:\cH\to \cH$ is a \emph{unilateral shift} if there exists a wandering subspace  $\cV\subseteq \cH$ for $T$ such that 
\[
\cH=\bigoplus_{k\in\mathbb N\cup\{0\}}T^k(\cV).
\]
In this case we say that the subspace $\cV$ is a \emph{generating} wandering subspace for $T$. Notice that 
\[
\cV=\cH\ominus T(\cH).
\]

Unilateral shifts are ubiquitous in operator theory. One  reason for this is provided by Wold's decomposition theorem.
\begin{thm*}[Wold decomposition]
\textit{Let $T:\cH\to\cH$ be an isometry. Then, 
\[
\cH=\cM\oplus\cM^\perp
\]
where $\cM$ and $\cM^\perp$ are invariant under $T$, $T:\cM\to\cM$ is a unilateral shift and $T:\cM^\perp\to\cM^\perp$ is a unitary operator.  Such decomposition is uniquely determined and it holds
\[
\cM=\bigoplus_{k\in\mathbb N \cup \{0\} } T^k(\cH\ominus T(\cH)),\qquad\cM^\perp=\bigcap_{k\in\mathbb N}T^k(\cH). 
\]}
\end{thm*}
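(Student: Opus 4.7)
The plan is to take the formulas appearing in the statement as definitions and verify that they produce the claimed splitting. That is, I set $\cV:=\cH\ominus T(\cH)$, $\cM:=\bigoplus_{k\in\bN\cup\{0\}}T^k(\cV)$ and work toward showing that $\cM^\perp$ in $\cH$ coincides with $\bigcap_{n\in\bN}T^n(\cH)$, with the required invariance and unitarity properties, and finally prove uniqueness. Since $T$ is an isometry, $T(\cH)$ is closed, hence the orthogonal decomposition $\cH=\cV\oplus T(\cH)$ is available as the starting point.

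The first routine checks are that $\cV$ is wandering and that the span produced is the correct object. For $m>n\geq 0$ and $v,w\in\cV$ the isometry property gives $\langle T^m v, T^n w\rangle=\langle T^{m-n}v,w\rangle$, and $T^{m-n}v\in T(\cH)$ while $w\perp T(\cH)$, so $T^m(\cV)\perp T^n(\cV)$. Next I would apply $T$ iteratively to the starting splitting: since $T$ preserves both orthogonality and closed subspaces, one obtains $T(\cH)=T(\cV)\oplus T^2(\cH)$, and by induction $\cH=\bigoplus_{k=0}^{n-1}T^k(\cV)\oplus T^n(\cH)$ for every $n\geq 1$. Letting $n\to\infty$ and taking orthogonal complements of the increasing closed span $\cM$ identifies $\cM^\perp$ with $\bigcap_{n\in\bN}T^n(\cH)$, yielding $\cH=\cM\oplus\cM^\perp$. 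The inclusion $T(\cM)\subseteq \cM$ is automatic from $T\bigl(T^k(\cV)\bigr)=T^{k+1}(\cV)$, and $T(\cM^\perp)\subseteq\cM^\perp$ is immediate since $x\in T^n(\cH)$ for all $n$ forces $Tx\in T^{n+1}(\cH)$ for all $n$.

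The delicate step, and the one I expect to be the main obstacle, is showing that $T$ restricted to $\cM^\perp$ is \emph{onto} its codomain, so that it is a unitary there and not merely an isometry. The strategy is: given $x\in\cM^\perp$, for each $n\geq 1$ there exists $y_n\in T^{n-1}(\cH)$ with $x=Ty_n$; injectivity of $T$ (again from the isometry property) collapses all the $y_n$'s to a single vector $y$, which therefore lies in $\bigcap_{n\geq 1}T^{n-1}(\cH)=\cM^\perp$. Thus $x=Ty\in T(\cM^\perp)$. Once this is established, $T|_{\cM^\perp}$ is an isometric bijection, hence unitary, and $T|_{\cM}$ is a unilateral shift with generating wandering subspace $\cV$ by construction.

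For uniqueness, suppose $\cH=\mathcal{N}\oplus\mathcal{N}^\perp$ is another decomposition with $T|_{\mathcal{N}}$ a unilateral shift and $T|_{\mathcal{N}^\perp}$ unitary. Unitarity of $T|_{\mathcal{N}^\perp}$ gives $\mathcal{N}^\perp=T^n(\mathcal{N}^\perp)\subseteq T^n(\cH)$ for every $n$, so $\mathcal{N}^\perp\subseteq\cM^\perp$. Conversely, the generating wandering subspace of $T|_{\mathcal{N}}$ must be $\mathcal{N}\ominus T(\mathcal{N})$, and the splitting $T(\cH)=T(\mathcal{N})\oplus\mathcal{N}^\perp$ collapses this to $\cH\ominus T(\cH)=\cV$, whence $\mathcal{N}=\cM$ and $\mathcal{N}^\perp=\cM^\perp$. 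Beyond the surjectivity step flagged above, the rest is standard Hilbert-space bookkeeping built on the elementary splitting $\cH=\cV\oplus T(\cH)$.
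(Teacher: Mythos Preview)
The paper does not actually prove the Wold decomposition: it is stated in the introduction as a classical result (the surrounding definitions are attributed to \cite{Nagy}) and then used as a black box throughout. So there is no ``paper's own proof'' to compare against. Your argument is the standard one and is correct; the only point worth a remark is step 4, where ``letting $n\to\infty$'' deserves one more line. From $\cH=\bigoplus_{k=0}^{n-1}T^k(\cV)\oplus T^n(\cH)$ you get that the orthogonal complement of $\bigoplus_{k=0}^{n-1}T^k(\cV)$ equals $T^n(\cH)$; since the closed span $\cM$ of all $T^k(\cV)$ is the closure of the increasing union, its complement is the intersection of the decreasing complements, i.e.\ $\cM^\perp=\bigcap_{n}T^n(\cH)$. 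With that said explicitly, the surjectivity and uniqueness paragraphs go through exactly as you wrote them.
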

Similarly to unilateral shifts, it is possible to define bilateral shifts. %Let now $T:\cH\to\cH$ be a unitary operator.  
A subspace $\cV\subseteq \cH$ is called a wandering subspace for the \emph{unitary} operator $T:\cH\to \cH$ if 
\[
T^m (\cV)\perp T^n (\cV)\qquad \forall m,n\in\mathbb Z , m\neq n
\]
and $T:\cH\to \cH$ is a \emph{bilateral shift} if there exists a generating wandering subspace  $\cV\subseteq \cH$ such that 
\[
\cH=\bigoplus_{k\in\mathbb Z}T^k(\cV).
\]
Notice that for bilateral shifts the generating wandering subspace is not uniquely determined.

If $T:\cH\to\cH$ is a shift, then $\cH$ admits an orthonormal basis of the form $\{\varphi_{j,k}\}_{j\in\mathbb X, k\in\mathbb Y}$, where $\mathbb X\subseteq \mathbb N$ and $\mathbb Y$ is either $\mathbb N\cup\{0\}$ or $\mathbb Z$ depending on $T$ being a unilateral or bilateral shift, such that $\{\varphi_{j,k}\}_{j\in\mathbb X}$ is an orthonormal basis for $T^k(\mathcal V)$ for every $k\in\mathbb Y$ and such that, for every fixed $k\in\mathbb Y$, it holds 
\[
T\varphi_{j,k}=\varphi_{j,k+1}.
\]
From now on when we say that the isometry $T:\cH\to\cH$ is a shift we mean that $T$ could be either a unilateral or a bilateral shift. However, the reader has to keep in mind that whenever $T$ is intended as a bilateral shift then $T$ is not only an isometry, but a unitary operator as well.

We now introduce the general setting in which our results take place. 
We will assume the following:
\begin{enumerate}
\item[$(i)$] $\cH$ is a Hilbert space and  $T:\cH\to\cH$ is an isometry ;
\item[$(ii)$] $\cH= \cM\oplus \cM^\perp$ where $T|_{\cM}: \cM\to \cM$ is a shift (bilateral or unilateral) and $T|_{\cM^{\perp}}: \cM^\perp\to \cM^\perp$ is the identity operator; i.e., we are considering isometries whose unitary part in the Wold decomposition is the identity operator.
\item[$(iii)$] $\mathcal V$ is a generating wandering subspace for $T|_\cM$ and $\Pi_{\cM^\perp}$ and $\Pi_k$ are the orthogonal projections from $\cH$ onto $\cM^\perp$ and $T^{k}(\cV)$ respectively. Here $k$ varies either in $\mathbb N\cup\{0\}$ or $\mathbb Z$ accordingly with the fact that $T$ is a unilateral or a bilateral shift.
\end{enumerate}
The following theorem is implicit in the existing literature, but we could not find a precise reference. In particular, when $T$ is a shift such that $\dim(\mathcal{V}) = + \infty$ the theorem is proved in \cite{Krengel} and \cite{KP}. Anyhow, a short proof will be included for the reader's convenience. 
\begin{thm}\label{thm:no-speed}

With the notation above, for every positive vanishing sequence $\varepsilon_n\to 0$ as $n\to+\infty$, there exists $f\in\cH$ such that
 \[
 \limsup_{N\to +\infty} \varepsilon_N^{-1} \bigg\|\frac{1}{N}\sumN T^nf-\Pi_{\cM^\perp}f\bigg\|_\cH=+\infty.
 \]
\end{thm}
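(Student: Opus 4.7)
The plan is to reduce the statement to an application of the uniform boundedness principle for the averaging operators $S_N:=\frac{1}{N}\sumN T^n$ restricted to $\cM$. First I would observe that since $T|_{\cM^\perp}$ is the identity, the projection $\Pi_{\cM^\perp}f$ is fixed by every $T^n$, so for $g:=f-\Pi_{\cM^\perp}f\in\cM$ one has
\[
\frac{1}{N}\sumN T^n f-\Pi_{\cM^\perp}f \;=\; S_N g.
\]
Thus it is enough to produce $g\in\cM$ with $\limsup_N\varepsilon_N^{-1}\|S_N g\|_\cH=+\infty$, and then set $f:=g$.

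The second step is to establish the $N$-independent lower bound $\|S_N\|_{\cM\to\cM}=1$. Fix a unit vector $\varphi\in\cV$; by the wandering property $\{T^k\varphi\}$ is an orthonormal system in $\cM$. For $M\geq N$ the unit vector
\[
g_M \;=\; \frac{1}{\sqrt{M}}\sum_{k=0}^{M-1}T^k\varphi
\]
satisfies $S_N g_M=(N\sqrt{M})^{-1}\sum_{n,k}T^{n+k}\varphi$, and a direct count of the pairs $(n,k)$ with $0\le n<N$, $0\le k<M$ contributing to each $T^j\varphi$ yields a trapezoidal profile whose squared $\ell^2$-norm gives $\|S_N g_M\|_\cH^2\to 1$ as $M\to\infty$. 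Combined with the trivial contractivity $\|S_N\|_{op}\le 1$ this gives $\|S_N\|_{\cM\to\cM}=1$ for every $N$; in the bilateral case the same conclusion can be read off from the spectral realisation of $T|_\cM$ as multiplication by $e^{i\theta}$ on a vector-valued $L^2(\mathbb T)$.

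The third step is Banach--Steinhaus in its Baire category form. Since $\varepsilon_N\to 0^+$ and $\|S_N\|_{op}=1$, we have $\sup_{N\geq M}\varepsilon_N^{-1}\|S_N\|_{\cM\to\cM}=+\infty$ for every $M\in\mathbb N$. Hence for each $M$ the set
\[
A_M \;:=\; \Bigl\{\,g\in\cM\;:\;\sup_{N\geq M}\varepsilon_N^{-1}\|S_N g\|_\cH=+\infty\,\Bigr\}
\]
is comeager in $\cM$. As $\cM$ is complete, the countable intersection $\bigcap_{M\geq 1}A_M$ is still comeager, hence nonempty, and any $g$ in it satisfies
\[
\limsup_{N\to\infty}\varepsilon_N^{-1}\|S_N g\|_\cH \;=\; \inf_{M}\sup_{N\geq M}\varepsilon_N^{-1}\|S_N g\|_\cH \;=\; +\infty,
\]
which proves the theorem.

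The main technical obstacle is the $N$-uniform lower bound $\|S_N\|_{op}\ge 1$: on any single wandering vector one only has $\|S_N\varphi\|_\cH=N^{-1/2}$, so it is essential to exploit the freedom to concentrate $\ell^2$-mass along a long consecutive block of shifts in order to push the norm of $S_N$ back up to $1$. Everything else is routine functional-analytic bookkeeping.
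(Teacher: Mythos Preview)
Your proof is correct and follows essentially the same route as the paper: both show that the averaging operators $S_N$ have operator norm exactly $1$ on $\cM$ by testing on long blocks $\sum_{k=0}^{H}T^k\varphi$ (your $g_M$ is just the normalised version of the paper's $f_H$), and then invoke the uniform boundedness principle. Your Baire-category handling of the $\limsup$ is a bit more explicit than the paper's one-line appeal to Banach--Steinhaus, but since each $\varepsilon_N^{-1}\|S_N g\|$ is finite the $\sup$ and $\limsup$ statements are equivalent anyway.
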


Despite the negative result in the previous theorem, it is possible to give some positive results on the speed convergence under appropriate assumptions on the operator and on the functions. The following result is no surprising and we include it for the sake of completeness.   
\begin{thm}\label{thm:unilateral-norm-general}
With the notation above, \begin{align*}
\bigg\|\frac{1}{N}&\sumN T^nf-\Pi_{\cM^\perp}f\bigg\|_\cH\leq \frac{1}{\sqrt{N}}\sum_{k}
\|\Pi_k f\|_{\cH}.
\end{align*}
Moreover, the rate of convergence $1/\sqrt{N}$ is sharp.
\end{thm}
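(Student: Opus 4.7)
\medskip
\noindent\textbf{Proof plan for Theorem \ref{thm:unilateral-norm-general}.} The plan is to decompose $f$ along the direct sum $\cH=\cM^\perp\oplus\bigoplus_k T^k(\cV)$, exploit that $T$ acts as the identity on $\cM^\perp$, and observe that on each fiber $T^k(\cV)$ the ergodic averages have exact norm $N^{-1/2}\|\Pi_k f\|_\cH$ by a plain Pythagorean computation. If the right-hand side is infinite there is nothing to prove, so I will assume $\sum_k \|\Pi_k f\|_\cH<+\infty$.

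First I would write
\[
f=\Pi_{\cM^\perp}f+\sum_{k}\Pi_k f,
\]
which converges in $\cH$ by the orthogonal decomposition. Since $T|_{\cM^\perp}=\id$, one has $N^{-1}\sum_{n=0}^{N-1}T^n\Pi_{\cM^\perp}f=\Pi_{\cM^\perp}f$, so by the triangle inequality
\[
\bigg\|\frac{1}{N}\sumN T^n f-\Pi_{\cM^\perp}f\bigg\|_\cH\le \sum_{k}\bigg\|\frac{1}{N}\sumN T^n\Pi_k f\bigg\|_\cH.
\]

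The key step is then the pointwise (in $k$) estimate. Fix $k$ and set $g=\Pi_k f\in T^k(\cV)$. Then $T^n g\in T^{k+n}(\cV)$, and the wandering property of $\cV$ guarantees that the spaces $T^{k+n}(\cV)$ for $n=0,\dots,N-1$ are pairwise orthogonal (this is where one uses that for a bilateral shift we need $T$ to be unitary, so that shifts preserve orthogonality in both directions). Combined with $\|T^n g\|_\cH=\|g\|_\cH$, the Pythagorean theorem gives
\[
\bigg\|\sumN T^n g\bigg\|_\cH^2=\sumN\|T^n g\|_\cH^2=N\|g\|_\cH^2,
\]
whence $\bigl\|N^{-1}\sumN T^n\Pi_k f\bigr\|_\cH=N^{-1/2}\|\Pi_k f\|_\cH$. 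Substituting back yields the claimed inequality.

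For the sharpness of $N^{-1/2}$, the above computation already does the job: picking any nonzero $f\in\cV=T^0(\cV)$ makes $f=\Pi_0 f$ and turns the inequality into the equality
\[
\bigg\|\frac{1}{N}\sumN T^n f-\Pi_{\cM^\perp}f\bigg\|_\cH=\frac{1}{\sqrt{N}}\|f\|_\cH,
\]
so the bound is attained for every $N$. No step here is genuinely hard; the only thing to be careful about is handling the two cases (unilateral, bilateral) uniformly, which is automatic once one states the orthogonality of the $T^k(\cV)$ in the appropriate index range.
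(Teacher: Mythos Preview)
Your proof is correct and follows essentially the same route as the paper: decompose along $\cM^\perp\oplus\bigoplus_k T^k(\cV)$, apply the triangle inequality over $k$, and use the pairwise orthogonality of $T^n g$ for $g\in T^k(\cV)$ to get the exact value $N^{-1/2}\|\Pi_k f\|_\cH$ on each fiber. The only cosmetic difference is that the paper carries out the Pythagorean step in coordinates, introducing the auxiliary orthonormal systems $\Psi_{j,k}(N)=N^{-1/2}\sum_{n=0}^{N-1}\varphi_{j,k+n}$ and invoking Parseval in $j$, whereas you argue directly with the wandering subspaces; your version is the coordinate-free rendition of the same idea, and your sharpness example ($f\in\cV$) coincides with the paper's.
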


The next theorem is our first main one. We obtain a result on the pointwise speed of convergence and the boundedness of a maximal function.

\begin{thm}\label{thm:maximal-unilateral-general}
With the notation above, assume that $\mathcal H$ is the function space $L^2_\mu:= L^2(X,d\mu)$ where $(X, \mu)$ is a probability space, and that $\varepsilon:\mathbb R_+\to \mathbb R_+$ is a positive decreasing function. %such that $\sup_n \varepsilon(n-1)/\varepsilon(n)$ is bounded. 
Define the maximal operator
    \[
    Sf (x) = \sup_{N\geq 1} N\varepsilon(N)\bigg| \dfrac{1}{N} \sumN T^n f (x) - \Pi_{\cM^\perp}f(x) \bigg|.
    \]    
 Then,   there exists a positive constant $c$ such that
    \begin{equation}\label{main-maximal-estimate}
    \|S f\|_{L^2(X,\mu)}\leq c \bigg( \sum_{n=0}^{+\infty} \varepsilon^2(n) \log^2(n+2) \bigg)^{\frac12}\sum_{k} \|\Pi_k f\|_{L^2_{\mu}}.
    \end{equation}
Moreover, if 
\begin{equation}\label{finite_condition} \sum_{n=0}^{+\infty} \varepsilon^2(n) \log^2(n+2) < +\infty \quad \text{and}  \quad  \sum_{k} \|\Pi_k f\|_{L^2_{\mu}} <+\infty, \end{equation}
then, for $\mu$-almost every $x$,
    \begin{equation}\label{2}
    \lim_{N\to+\infty} N\varepsilon(N)\bigg|\frac{1}{N} \sumN T^n f (x) - \Pi_{\cM^\perp}f(x) \bigg|= 0.
    \end{equation}
   \end{thm}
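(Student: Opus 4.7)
The plan is to reduce the statement to the case of a single wandering level $T^k(\cV)$ and there invoke the Rademacher--Menshov maximal inequality for orthogonal sums. Since $T\Pi_{\cM^\perp}=\Pi_{\cM^\perp}$, the operator $Sf$ only depends on $g:=f-\Pi_{\cM^\perp}f=\sum_k \Pi_k f\in\cM$, so by the triangle inequality in $L^2_\mu$ it suffices to establish, for each $k$ and each $g\in T^k(\cV)$, the bound
\[
\|Sg\|_{L^2_\mu}\,\le\, c\,\Bigl(\sum_n \varepsilon^2(n)\log^2(n+2)\Bigr)^{1/2}\|g\|_{L^2_\mu},
\]
and then sum over $k$ with the triangle inequality to obtain \eqref{main-maximal-estimate}. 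The crucial structural input is that $T^n g\in T^{n+k}(\cV)$; hence the wandering property makes $\{T^n g\}_{n\ge 0}$ a system of mutually orthogonal vectors in $L^2_\mu$ with $\|T^n g\|_{L^2_\mu}=\|g\|_{L^2_\mu}$, and in particular $\|A_N g\|_{L^2_\mu}^2=N\|g\|^2_{L^2_\mu}$, where $A_N g:=\sumN T^n g$.

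For the single-level bound I would apply the Rademacher--Menshov inequality to this orthogonal system, which gives, for every $j\ge 0$,
\[
\bigl\|\max_{1\le N\le 2^j}|A_N g|\bigr\|_{L^2_\mu}^2 \,\le\, C\,j^2\,2^j\,\|g\|_{L^2_\mu}^2.
\]
Then a dyadic decomposition handles the weight $N\varepsilon(N)$: since $\varepsilon$ is decreasing, for $N\in[2^{j-1},2^j)$ one has $\varepsilon(N)\le \varepsilon(2^{j-1})$, so with $M_j^\ast:=\max_{N\le 2^j}|A_N g|$ the elementary pointwise inequality $\sup_j a_j\le(\sum_j a_j^2)^{1/2}$ yields
\[
Sg\,\le\,\sup_{j\ge 1}\varepsilon(2^{j-1})\,M_j^\ast\,\le\,\Bigl(\sum_{j\ge 1}\varepsilon^2(2^{j-1})(M_j^\ast)^2\Bigr)^{1/2}.
\]
Taking $L^2_\mu$ norms and using the Rademacher--Menshov bound blockwise,
\[
\|Sg\|_{L^2_\mu}^2\,\le\, C\,\|g\|^2_{L^2_\mu}\sum_{j\ge 1}\varepsilon^2(2^{j-1})\,j^2\,2^j \,\le\, C'\,\|g\|^2_{L^2_\mu}\sum_{n}\varepsilon^2(n)\log^2(n+2),
\]
where in the last step the monotonicity of $\varepsilon$ allows us to identify the dyadic sum with (a constant multiple of) the full series. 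Summing over $k$ gives \eqref{main-maximal-estimate}.

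For the almost everywhere convergence under hypothesis \eqref{finite_condition}, I would sharpen the previous estimate into a tail bound: truncating the dyadic decomposition at the smallest $j_0$ with $2^{j_0-1}\ge M$,
\[
\bigl\|\sup_{N\ge M}\varepsilon(N)|A_N g|\bigr\|_{L^2_\mu}^2\,\le\, C\,\|g\|^2_{L^2_\mu}\sum_{n\ge M/4}\varepsilon^2(n)\log^2(n+2),
\]
which, thanks to \eqref{finite_condition}, tends to $0$ as $M\to\infty$. Since the supremum on the left is pointwise monotone nonincreasing in $M$, this forces $\limsup_{N\to\infty}\varepsilon(N)|A_N g(x)|=0$ for $\mu$-a.e.~$x$ and every single-level $g=\Pi_k f$. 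To recombine the levels, note that $\varepsilon(N)|A_N(\Pi_k f)(x)|\le S(\Pi_k f)(x)$ and that, by the maximal estimate just proved together with \eqref{finite_condition}, $\sum_k S(\Pi_k f)(x)<\infty$ for $\mu$-a.e.~$x$; dominated convergence in the index $k$ then gives, for a.e.~$x$,
\[
\varepsilon(N)\Bigl|\sum_k A_N(\Pi_k f)(x)\Bigr|\,\le\,\sum_k \varepsilon(N)|A_N(\Pi_k f)(x)|\,\longrightarrow\, 0,
\]
which is \eqref{2}. The main technical point is the compatibility between the dyadic block estimate and the $\log^2$ weight: it is exactly the factor $j^2$ supplied by Rademacher--Menshov that, after the dyadic summation, produces the $\log^2(n+2)$ weight appearing on the right-hand side of \eqref{main-maximal-estimate}.
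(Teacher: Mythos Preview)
Your argument is correct, but it is organized differently from the paper's proof. Both proofs reduce to a single wandering level and exploit the orthogonality of $\{T^n g\}_{n\ge 0}$; the difference lies in how the weight $\varepsilon(N)$ is handled and how the almost everywhere statement is derived. The paper introduces an auxiliary maximal function $\tilde Sf(x)=\sup_N\big|\sum_{n=0}^{N-1}\varepsilon(n)T^nf(x)\big|$ and applies Rademacher--Menshov \emph{once} to the weighted orthogonal system $\{\varepsilon(n)\Phi(k,n,\cdot)\}_n$, which produces the factor $\big(\sum_n\varepsilon^2(n)\log^2(n+2)\big)^{1/2}$ directly; an Abel summation then gives the pointwise comparison $Sf\le 2\tilde Sf$, and the boundedness of $\tilde S$ yields almost everywhere convergence of the series $\sum_n\varepsilon(n)T^nf(x)$, after which Kronecker's lemma immediately gives \eqref{2}. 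Your route instead applies Rademacher--Menshov blockwise to the unweighted partial sums $\max_{N\le 2^j}|A_Ng|$, inserts the weight via a dyadic decomposition, and then recovers the full series $\sum_n\varepsilon^2(n)\log^2(n+2)$ by monotonicity of $\varepsilon$; for the pointwise limit you use a tail maximal estimate together with dominated convergence in $k$ rather than Kronecker's lemma. Your approach is slightly heavier computationally (the dyadic--to--full comparison and the tail argument both need the monotonicity of $\varepsilon$ in an essential way), whereas the paper's use of Abel summation plus Kronecker is more streamlined and makes the role of the auxiliary operator $\tilde S$ transparent. On the other hand, your blockwise argument is self-contained and avoids introducing $\tilde S$ altogether. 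Either way the key analytic input is the same: the $\log^2$ factor in Rademacher--Menshov is exactly what is needed to match the weight on the right-hand side of \eqref{main-maximal-estimate}.
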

The following are two immediate applications of the above theorem. 

\begin{cor}\label{cor:baker}
Let $B:[0,1)^2\to [0,1)^2$ be the baker's transformation defined by
\[
B(x,y)=\begin{cases}
(2x, \frac{y}{2}) &  {\textrm{if }} 0\leq x<\frac 12\\
(2x-1, \frac{y}{2}+\frac{1}{2}) &  {\textrm{if }} \frac12\leq x<1.
\end{cases}
\]
Assume that $f$ has an absolutely convergent expansion with respect to the product Walsh system on the square $[0,1)^2$.
Then, for every $\eta>0$ and for almost every $x$,
\[
\lim_{N\to+\infty} \frac{\sqrt{N}}{(\log(1+N))^{\frac32+\eta}}\bigg|\frac{1}{N}\sumN f(B^n x)-\int_{\mathbb T^d} f(y)\, dy\bigg|=0.
\]
\end{cor}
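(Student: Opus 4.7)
The plan is to apply Theorem \ref{thm:maximal-unilateral-general} to the composition operator $Tf := f\circ B$ on $\cH = L^2([0,1)^2)$ with the choice
\[
\varepsilon(N) = N^{-1/2}\bigl(\log(N+1)\bigr)^{-3/2-\eta},
\]
so that $N\varepsilon(N)$ coincides with the prefactor in the statement.

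The first step is to place $T$ in the setup (i)--(iii). Via the two-sided binary expansion, $B$ is conjugate to the left shift on $\{0,1\}^{\mathbb{Z}}$ with Bernoulli measure, and the product Walsh basis of $[0,1)^2$ corresponds to the characters $\chi_S(\omega) = \prod_{i\in S}(-1)^{\omega_i}$ indexed by finite subsets $S\subset\mathbb{Z}$, with $\chi_\emptyset\equiv 1$. Under this identification $T\chi_S = \chi_{S+1}$. Take $\cM^\perp = \mathbb{C}\cdot 1$ (the invariant subspace of the ergodic system $B$, on which $T$ acts as the identity), $\cM$ the mean-zero subspace, and $\cV$ the closed linear span of $\{\chi_S : \min S = 0\}$. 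Since the map $S\mapsto (\min S,\, S - \min S)$ is a bijection between the finite nonempty subsets of $\mathbb{Z}$ and $\mathbb{Z}\times\{S'\subset\mathbb{Z}_{\geq 0}:\min S' = 0\}$, the subspaces $T^k\cV$ are pairwise orthogonal and $\bigoplus_{k\in\mathbb{Z}}T^k\cV = \cM$, so $\cV$ is a generating wandering subspace for the bilateral shift $T|_\cM$.

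The second step is to verify the hypotheses of Theorem \ref{thm:maximal-unilateral-general}. The projection onto $T^k\cV$ is $\Pi_k f = \sum_{\min S = k}\hat f(S)\chi_S$, where $\hat f(S)$ denote the product Walsh coefficients of $f$. Using $\sqrt{\sum a_j^2}\leq\sum a_j$ for $a_j\geq 0$, the absolute convergence of the Walsh expansion yields
\[
\sum_{k\in\mathbb{Z}}\|\Pi_k f\|_{L^2} \;\leq\; \sum_{S}|\hat f(S)| \;<\; +\infty.
\]
A direct asymptotic estimate gives $\sum_{n}\varepsilon^2(n)\log^2(n+2) \asymp \sum_{n\geq 2} n^{-1}(\log n)^{-1-2\eta} < +\infty$ since $\eta > 0$. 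Hence both conditions in \eqref{finite_condition} are met, and \eqref{2} produces the claimed a.e.\ convergence once ergodicity of $B$ is used to identify $\Pi_{\cM^\perp}f$ with $\int_{[0,1)^2}f$.

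The only delicate point is the bookkeeping in the first step: one must verify that the product Walsh basis is genuinely organized by orbits of the shift, i.e.\ that each $\chi_S$ (with $S$ nonempty) occurs exactly once in the collection $\{T^k\varphi : k\in\mathbb{Z},\ \varphi\in\cV\}$, and that $\chi_\emptyset$ accounts for the identity part. Everything else is a routine matching of hypotheses with Theorem \ref{thm:maximal-unilateral-general}.
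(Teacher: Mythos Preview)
Your argument is correct and is essentially the same as the paper's: both verify that $T_B$ is a bilateral shift on the mean-zero part of $L^2([0,1)^2)$ by organizing the product Walsh basis into shift orbits indexed by finite nonempty subsets $S\subset\mathbb Z$ (you via the Bernoulli-shift/character picture, the paper via the explicit Rademacher construction $R_k$ and $W_{k_1\cdots k_n}$), take the wandering subspace to be the span of those basis elements with fixed $\min S$, and then invoke Theorem~\ref{thm:maximal-unilateral-general}. Your explicit check that $\sum_k\|\Pi_kf\|_{L^2}\le\sum_S|\hat f(S)|<\infty$ and that $\sum_n\varepsilon(n)^2\log^2(n+2)<\infty$ is exactly what the paper leaves implicit in the phrase ``Theorem~\ref{thm:maximal-unilateral-general} applies''; note that the identification $\Pi_{\cM^\perp}f=\int f$ comes directly from $\cM^\perp=\mathbb C\cdot 1$ and does not require ergodicity.
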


\begin{cor}\label{cor:Laguerre} Let $T$ be the operator \[Tf(x)=f(x)-\int_0^x f(y)dy\] defined on the Hilbert space $L^2(\bR_+, e^{-x}\, dx)$ and let  $\{L_n\}_{n\in\mathbb N}$ be the system of Laguerre polynomials. Assume that the Laguerre coefficients of $f$ are absolutely summable. 
Then, for every $\eta>0$ and for almost every $x$,
\[
\lim_{N\to+\infty}\frac{\sqrt{N}}{(\log(1+N))^{\frac32+\eta}}\bigg| \frac{1}{N}\sum_{n=0}^{N-1} T^n f(x) \bigg|=0.
\]
\end{cor}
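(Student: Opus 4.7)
The plan is to recognize Corollary \ref{cor:Laguerre} as a direct application of Theorem \ref{thm:maximal-unilateral-general} once we show that $T$ is a unilateral shift on $L^2(\bR_+, e^{-x}\,dx)$ whose generating wandering subspace is spanned by $L_0 = 1$. First I would note that $e^{-x}\,dx$ is a probability measure on $\bR_+$, so the ambient space fits the setting of the theorem. The classical Laguerre polynomials $\{L_n\}_{n\geq 0}$ form an orthonormal basis of $L^2(\bR_+, e^{-x}\,dx)$, so it suffices to understand the action of $T$ on them.

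The heart of the matter is the identity $T L_n = L_{n+1}$ for all $n \geq 0$. I would verify it by a direct computation using the explicit expansion
\[
L_n(x) = \sum_{k=0}^{n}\binom{n}{k}\frac{(-x)^k}{k!},
\]
integrating term by term, reindexing the integrated sum, and recognizing Pascal's identity $\binom{n}{k}+\binom{n}{k-1}=\binom{n+1}{k}$. Given this, $T$ maps an orthonormal basis to an orthonormal system and therefore extends to an isometry on the whole space, coinciding with $f\mapsto f - \int_0^{\cdot} f$ on the dense polynomial subspace. Since the images $\{L_n\}_{n\geq 1}$ together with $L_0$ span everything, the unitary part of the Wold decomposition is trivial, so $\cM^{\perp} = \{0\}$ and $\Pi_{\cM^\perp} f = 0$. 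The projection $\Pi_k$ is then onto $\SPAN\{L_k\}$, and $\|\Pi_k f\|_{L^2} = |c_k|$ where $c_k$ denotes the $k$-th Laguerre coefficient of $f$. Thus the hypothesis that the Laguerre coefficients are absolutely summable is exactly $\sum_k \|\Pi_k f\|_{L^2_{\mu}} < +\infty$.

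Once the shift structure is established, the rest is bookkeeping. I would apply Theorem \ref{thm:maximal-unilateral-general} with the choice
\[
\varepsilon(N) = \frac{1}{\sqrt{N}\,(\log(1+N))^{\frac{3}{2}+\eta}},
\]
so that $N\varepsilon(N) = \sqrt{N}/(\log(1+N))^{3/2+\eta}$ matches the normalization in the statement. The summability condition \eqref{finite_condition} becomes
\[
\sum_{n=0}^{+\infty}\frac{\log^2(n+2)}{n\,(\log(1+n))^{3+2\eta}} \asymp \sum_{n} \frac{1}{n\,(\log n)^{1+2\eta}},
\]
which is convergent for every $\eta > 0$. Combined with the absolute summability of the Laguerre coefficients, this yields \eqref{2}, giving the claimed pointwise vanishing.

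The only real obstacle is the key computation $T L_n = L_{n+1}$; everything else is a translation between the abstract framework of the main theorem and the concrete setting of the Laguerre basis. I would not expect any subtlety concerning the domain of $T$ since boundedness (in fact, isometry) of $T$ on $L^2(\bR_+, e^{-x}\,dx)$ is obtained for free from the identity on the orthonormal basis.
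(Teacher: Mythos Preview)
Your proposal is correct and follows essentially the same route as the paper: verify the identity $TL_n = L_{n+1}$ by direct computation on the explicit expansion of $L_n$, conclude that $T$ is the unilateral shift on $L^2(\bR_+, e^{-x}\,dx)$ with one-dimensional generating wandering subspace, and then invoke Theorem~\ref{thm:maximal-unilateral-general} with $\varepsilon(n)\asymp (n+1)^{-1/2}(\log(2+n))^{-3/2-\eta}$. You spell out the bookkeeping (identifying $\Pi_k$, checking the summability of $\varepsilon^2(n)\log^2(n+2)$) in slightly more detail than the paper, which simply cites von Neumann and Brown--Halmos for the shift identification and then says ``Theorem~\ref{thm:maximal-unilateral-general} applies.''
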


Our last theorem is about ergodic means associated with the endomorphisms of the $2$-dimensional torus $\mathbb{T}^2=\mathbb{R}^2/\mathbb{Z}^2$ and the classical trigonometric expansion. We prove that it is enough to require a mild summability condition with respect to a logarithmic weight on the Fourier coefficients of a function to gain a speed of convergence essentially of order $N^{-\frac 12}$ for the ergodic means.

\begin{thm}\label{thm:2x2matrix-complete}
Let $A$ be a $2\times 2$ integer matrix such that $\det(A)\neq 0$ and no eigenvalue of $A$ is a root of unity. Assume that $f\in L^2(\mathbb T^2, dx)$ has the trigonometric expansion 
\[ 
f(x) = \sum_{\xi \in \bZ^2} \widehat{f}(\xi) e^{2\pi i x \xi } 
\]
and that, for some $\delta>0$, 
\begin{equation} \label{condition_Fourier}
\sum_{\xi\in\mathbb Z^2}(\log(1+\vert \xi \vert ))^{1+\delta} |\widehat{f}(\xi)|^2<+\infty. 
\end{equation}
Then, for every $\eta>0$ and for almost every $x\in\mathbb T^2$,
\[
\lim_{N\to+\infty} \frac{\sqrt{N}}{(\log(1+N))^{\frac32+\eta}}\bigg|\frac1{N}\sumN f(A^nx)-\int_{\mathbb T^2}f(y)\, dy\bigg|=0.
\]
\end{thm}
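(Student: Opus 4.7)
The plan is to apply Theorem~\ref{thm:maximal-unilateral-general} to the composition operator $Tf(x)=f(Ax)$ acting on $\cH = L^2(\bT^2,dx)$. Since $A$ is an integer matrix with $\det A \neq 0$, the map $x\mapsto Ax$ is a Lebesgue-preserving endomorphism of $\bT^2$, hence $T$ is an isometry, and on the Fourier basis $e_\xi(x):=e^{2\pi i\,\xi\cdot x}$ it acts by $Te_\xi = e_{A^T\xi}$. The analysis of $T$ therefore reduces to the linear dynamics of $\xi \mapsto A^T\xi$ on $\bZ^2$.

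First I would verify the decomposition required in (ii). The hypothesis that no eigenvalue of $A$ is a root of unity implies that $A^T$ has no nonzero periodic point on $\bZ^2$ and that $\bigcap_{k\ge 0}(A^T)^k\bZ^2=\{0\}$ (i.e.\ $A$ is an exact endomorphism). Consequently $\cM^\perp=\SPAN\{e_0\}$ consists of constants, $T|_{\cM^\perp}=\mathrm{Id}$, and the rest of the space supports a shift structure. If $|\det A|=1$, then $A^T$ is a $\bZ^2$-bijection, every non-zero orbit of $A^T$ is bi-infinite, and $T|_{\cM}$ is a bilateral shift; for each orbit $\mathcal O_j$ I would pick a minimum-norm representative $\xi^{(j)}_0$ and take the generating wandering subspace $\cV$ to be the closed linear span of $\{e_{\xi^{(j)}_0}\}_j$. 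If $|\det A|>1$, every non-zero orbit is forward-only with canonical base point $\xi^{(j)}_0\in\bZ^2\setminus A^T\bZ^2$, and $T|_{\cM}$ is a unilateral shift with $\cV$ equal to the closed span of $\{e_\xi : \xi\in\bZ^2\setminus A^T\bZ^2\}$. In either case, setting $\xi^{(j)}_k := (A^T)^k \xi^{(j)}_0$ and $\varphi_{j,k}:=e_{\xi^{(j)}_k}$ realises the structure of the theorem, and $\|\Pi_k f\|_{L^2}^2 = \sum_j |\widehat{f}(\xi^{(j)}_k)|^2$.

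The heart of the argument is the summability $\sum_k \|\Pi_k f\|_{L^2} < +\infty$ under hypothesis~\eqref{condition_Fourier}. A weighted Cauchy--Schwarz with weight $(1+|k|)^{1+\delta}$ yields
\[
\sum_k \|\Pi_k f\|_{L^2} \leq \Big(\sum_k (1+|k|)^{-1-\delta}\Big)^{1/2}\Big(\sum_{\xi \neq 0}(1+|k(\xi)|)^{1+\delta}|\widehat{f}(\xi)|^2\Big)^{1/2},
\]
where $k(\xi)$ is the unique index such that $\xi$ appears as $\xi^{(j)}_{k(\xi)}$. The first factor converges because $\delta>0$. The second is controlled by \eqref{condition_Fourier} provided one establishes the pointwise logarithmic orbit bound $|k(\xi)|\le C_A\bigl(1+\log(1+|\xi|)\bigr)$ for every $\xi\in\bZ^2\setminus\{0\}$, which I expect to be the main obstacle. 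In the bilateral case it follows by writing $\xi^{(j)}_k$ in the eigenbasis of $A^T$ and exploiting the convexity of $|\xi^{(j)}_k|^2$ in $k$ together with the minimality of $|\xi^{(j)}_0|$, yielding $|\xi^{(j)}_k|\gtrsim \lambda_1^{|k|}$ with $|\lambda_1|>1$ the expanding eigenvalue. In the unilateral case the bound follows from the estimate that the shortest non-zero vector of the sublattice $(A^T)^k\bZ^2$ grows like $|\det A|^{k/2}$; this is a consequence of the geometry of two-dimensional lattices and Dirichlet's approximation theorem applied to the eigendirections of $A^T$ (in the saddle configuration), and it is immediate in the purely expanding configuration since then $|(A^T)^k v|\gtrsim \lambda_{\min}^k$ for every non-zero integer $v$.

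Granted the orbit bound, I would apply Theorem~\ref{thm:maximal-unilateral-general} with $\varepsilon(N):=N^{-1/2}(\log(1+N))^{-3/2-\eta}$, so that $N\varepsilon(N)=\sqrt{N}/(\log(1+N))^{3/2+\eta}$ matches the factor in the statement. The summability $\sum_n \varepsilon(n)^2\log^2(n+2) \sim \sum_n n^{-1}(\log n)^{-1-2\eta}$ is finite for every $\eta>0$, so both conditions in \eqref{finite_condition} are satisfied, and conclusion \eqref{2} combined with $\Pi_{\cM^\perp}f = \int_{\bT^2}f\,dy$ yields the claimed almost-everywhere decay rate.
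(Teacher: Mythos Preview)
Your proposal is correct and follows the same route as the paper: reduce to Theorem~\ref{thm:maximal-unilateral-general} by proving a uniform orbit-growth bound $|(A^*)^k\xi|\gtrsim q^{|k|}$ over the chosen representatives, then convert this via a weighted Cauchy--Schwarz into $\sum_k\|\Pi_kf\|_{L^2}<\infty$ under hypothesis~\eqref{condition_Fourier}, and finish with $\varepsilon(N)=N^{-1/2}(\log(1+N))^{-3/2-\eta}$.

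Two minor points of divergence are worth flagging. In the bilateral case the paper selects orbit representatives by minimising $|S\xi|_\infty$ (i.e.\ in eigenbasis coordinates) rather than the Euclidean norm; this makes the comparability of the two eigen-components immediate, whereas your convexity/minimality sketch arrives at the same bound slightly less directly. In the unilateral case you should not silently absorb the non-diagonalisable subcase into ``purely expanding'': a $2\times2$ Jordan block with eigenvalue $\lambda$ is not expansive in the sense $|Av|\ge q|v|$, and the paper treats it separately by observing that necessarily $\lambda\in\bZ$ and arguing via the inclusion $A^k\bZ^2\subseteq\lambda^kS^{-1}\bZ^2$ (equivalently, one can note that the unique eigendirection is a rational line, so the transverse coordinate of any nonzero integer vector is uniformly bounded below).
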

We point out that, in the above theorem, $A$ has no eigenvalues which are not roots of unity if and only if $A$ is an ergodic matrix \cite[Corollary 2.2]{EW}. Therefore, the above theorem guarantees a speed a convergence for the ergodic means of  a large class of functions for a particular instance of Birkhoff's ergodic theorem.  Condition \eqref{condition_Fourier} is satisfied, for instance, by functions in any fractional Sobolev space. A more general sufficient condition in terms of the $L^2$ integral modulus of continuity will be given in Proposition \ref{prop:continuity} below.

The situation in dimension $d>2$ seems to be more complicated. Nonetheless, we prove the following partial result, which is a corollary of Theorem \ref{thm:maximal-unilateral-general}.

\begin{cor}\label{cor:A}
Let $A$ be a $d\times d$ matrix with integer coefficients and $\det(A)\neq 0$. Suppose there exists a set $\mathcal{E} \subseteq\bZ^d\setminus \{ 0\} $ such that the subspace of $L^2_0(\bT^d,dx)$
\[
\mathcal{V}_\mathcal{E}: = \{ f \in L^2_0(\bT^d, dx) : \supp(\widehat{f}) \subseteq \mathcal{E} \}
\]
is a generating wandering subspace for the operator $ T_A f = f \circ A $. Suppose that there exist $c>0, q>1 $, such that for all $\xi \in \mathcal{E}$ and $k \in \mathbb{Y}$ (where $\mathbb{Y}$ is either $\mathbb{N} \cup \{ 0 \} $ or $\mathbb{Z}$ depending on whether $T_A$ is a unilateral or bilateral shift),
\begin{equation}\label{svp} |A^k \xi | \geq c q^{|k|}. \end{equation}
Assume that $f\in L^2(\mathbb T^2, dx)$ has the trigonometric expansion 
\[ 
f(x) = \sum_{\xi \in \bZ^2} \widehat{f}(\xi) e^{2\pi i x \xi } 
\]
and that, for some $\delta>0$, 
\begin{equation} \label{condition_Fourier_2}
\sum_{\xi\in\mathbb Z^2}(\log(1+\vert \xi \vert ))^{1+\delta} |\widehat{f}(\xi)|^2<+\infty. 
\end{equation}
Then, for every $\eta>0$ and for almost every $x\in\mathbb T^2$,
\[
\lim_{N\to+\infty} \frac{\sqrt{N}}{(\log(1+N))^{\frac32+\eta}}\bigg|\frac1{N}\sumN f(A^nx)-\int_{\mathbb T^2}f(y)\, dy\bigg|=0.
\]
\end{cor}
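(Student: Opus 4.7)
The plan is to apply Theorem \ref{thm:maximal-unilateral-general} with $\cH = L^2(\mathbb{T}^d, dx)$, $T = T_A$, $\mathcal{V} = \mathcal{V}_\mathcal{E}$, and with the unitary part of the Wold decomposition equal to the subspace of constants, so that $\Pi_{\cM^\perp} f = \int_{\mathbb{T}^d} f(y)\,dy$. Choose the weight
\[
\varepsilon(N) = \frac{1}{\sqrt{N}\,(\log(1+N))^{\frac{3}{2}+\eta}},
\]
so that $N\varepsilon(N) = \sqrt{N}/(\log(1+N))^{3/2+\eta}$ is precisely the factor appearing in the conclusion. Then $\varepsilon^2(n)\log^2(n+2) \sim n^{-1}(\log n)^{-1-2\eta}$ is summable since $\eta > 0$, so the first half of \eqref{finite_condition} is automatic.

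The main step is to verify $\sum_{k} \|\Pi_k f\|_{L^2} < +\infty$. A direct computation with Fourier series shows that $T_A$ acts on frequencies through the transpose: if $\widehat g$ is supported in $S \subseteq \bZ^d$, then $\widehat{T_A g}$ is supported in $A^T S$. Consequently $T_A^k(\mathcal V_\mathcal E) = \{h \in L^2_0(\mathbb T^d) : \supp(\widehat h) \subseteq (A^T)^k \mathcal E\}$, so $\Pi_k f$ is simply the Fourier projection of $f$ onto the frequencies in $(A^T)^k \mathcal E$, and the sets $(A^T)^k\mathcal E$ are pairwise disjoint by the wandering property. Interpreting \eqref{svp} for $A^T$ (which is equivalent to the stated condition up to constants since $A$ and $A^T$ share singular values), every $\eta \in (A^T)^k\mathcal E$ satisfies $|\eta| \geq c q^{|k|}$, and hence $\log(1+|\eta|) \geq c'(1+|k|)$ for some new constant $c' > 0$ and all $k \in \mathbb{Y}$.

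Setting $M_k^2 := \sum_{\eta \in (A^T)^k \mathcal E}(\log(1+|\eta|))^{1+\delta}|\widehat f(\eta)|^2$, the previous bound gives
\[
\|\Pi_k f\|_{L^2}^2 \leq (c'(1+|k|))^{-(1+\delta)} M_k^2.
\]
Disjointness of the Fourier supports and \eqref{condition_Fourier_2} imply $\sum_k M_k^2 \leq \sum_{\eta \in \bZ^d}(\log(1+|\eta|))^{1+\delta}|\widehat f(\eta)|^2 < +\infty$, and a Cauchy--Schwarz argument yields
\[
\sum_k \|\Pi_k f\|_{L^2} \leq \biggl(\sum_{k \in \mathbb{Y}} (c'(1+|k|))^{-(1+\delta)}\biggr)^{1/2}\biggl(\sum_k M_k^2\biggr)^{1/2} < +\infty,
\]
which converges thanks to $1 + \delta > 1$. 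The second half of \eqref{finite_condition} is thus verified, and Theorem \ref{thm:maximal-unilateral-general} gives the stated pointwise rate. The main obstacle is the Fourier-side identification of $\Pi_k f$ combined with the need to reconcile the expansion hypothesis \eqref{svp} (phrased in terms of $A$) with the action on frequencies (through $A^T$), and to track the bilateral case $k \in \bZ$ uniformly in the sign of $k$ so that both forward and backward iterates are controlled by the exponential lower bound.
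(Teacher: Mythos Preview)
Your approach is essentially the one the paper intends: apply Theorem~\ref{thm:maximal-unilateral-general} with $\varepsilon(n)=(n+1)^{-1/2}(\log(2+n))^{-3/2-\eta}$, identify $\Pi_k f$ as the Fourier restriction of $f$ to $(A^{*})^{k}\mathcal E$, and bound $\sum_k\|\Pi_k f\|$ via Cauchy--Schwarz with the weight $(1+|k|)^{(1+\delta)/2}$, converting the exponential lower bound on the frequency into a power of $|k|$. The paper's proof of the corollary is a one-line reference to the argument for Theorem~\ref{thm:2x2matrix-complete}, and what you wrote is a faithful reconstruction of that argument.

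There is one genuine gap. You correctly note that $T_A$ acts on the Fourier side through $A^{T}$ (equivalently $A^{*}$), so the quantity you actually need is $|(A^{*})^{k}\xi|\ge cq^{|k|}$ for $\xi\in\mathcal E$, whereas \eqref{svp} is stated for $A^{k}$. Your claim that the two are ``equivalent up to constants since $A$ and $A^{T}$ share singular values'' is not valid: equality of singular values gives $\sigma_{\min}(A^{k})=\sigma_{\min}((A^{T})^{k})$, but \eqref{svp} is a pointwise estimate at specific lattice vectors $\xi\in\mathcal E$, not a bound on $\sigma_{\min}$. For a hyperbolic $A$, a vector $\xi$ lying near the contracting eigendirection of $A$ may lie near the expanding eigendirection of $A^{T}$, and then $|A^{k}\xi|$ and $|(A^{T})^{k}\xi|$ behave oppositely in $k$. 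In the paper's proof of Theorem~\ref{thm:2x2matrix-complete} this issue is handled by proving the relevant lemmas for arbitrary such matrices and then \emph{applying them to $A^{*}$}; in the corollary the intended (and usable) hypothesis is $|(A^{*})^{k}\xi|\ge cq^{|k|}$ for $\xi\in\mathcal E$, which is what the argument actually consumes. With \eqref{svp} read that way, your proof is complete and coincides with the paper's.
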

Assumption \eqref{svp} is satisfied, for instance, whenever $A$ is an expansive matrix, i.e., whenever there exists $q>1$ such that $|Ax|\geq q |x| $ for all $x \in \bR^d$.

We should also mention that in the literature there exist theorems of flavor similar to Theorem \ref{thm:2x2matrix-complete}. For example in \cite[Theorem 1.2]{Lobbe} the author proves the law of the iterated logarithm for averages of the form 
\[\frac 1N\sum_{n=0}^{N-1}f(M_n x) \]
where $(M_n)_{n\geq 1}$ is a sequence of integer matrices satisfies a strong Hadamard type condition \cite[Condition (1.4)]{Lobbe} and $f$ is a function of finite Hardy-Krause total variation. Although our theorem gives less precise asymptotic information than the law of the iterated logarithm, our assumptions are much less stringent. If $A$ is a matrix as in Theorem \ref{thm:2x2matrix-complete}, then the sequence $M_n:=A^n$ does not in general satisfy \cite[Condition (1.4)]{Lobbe} and functions satisfying \eqref{condition_Fourier} can be quite rough.

\section{Proof of Theorems \ref{thm:no-speed}, \ref{thm:unilateral-norm-general} and  \ref{thm:maximal-unilateral-general} and of Corollaries \ref{cor:baker} and \ref{cor:Laguerre}}\label{sec:main_results}
 The proof of Theorem \ref{thm:no-speed} is straightforward.
\begin{proof}[Proof of Theorem \ref{thm:no-speed}]
Since $T$ has operator norm  $1$, the averaging operator $U_N:=\frac{1}{N}\sumN T^n$ has operator norm at most $1$. Furthermore, the norm is at least $1$, as it can be seen by testing the operator $U_N$ on the functions $f_H=\sum_{k=0}^{H}\varphi_{j,k}$ and letting $H\to+\infty$. Here $\{\varphi_{j,k}\}_{j,k}$ is an orthonormal basis associated to the shift $T$. Therefore, the family of operators $\{\varepsilon^{-1}_N U_N\}_{N}$ is not uniformly bounded in the operator norm. Hence, by the Banach-Steinhaus uniform boundedness principle, there exists $f\in \mathcal M\subseteq\cH$ such that 
\[
\limsup_{N\to+\infty} \varepsilon_N^{-1}\bigg\| \frac{1}{N}\sumN T^nf\bigg\|_{\cH}=+\infty.
\]
\end{proof}
As mentioned, Theorem \ref{thm:unilateral-norm-general}  can also be proved using the unitary equivalence with the shift operator on vector valued Hardy spaces in the unit disc. However, for the sake of completeness, we provide here a direct proof.

\begin{proof}[Proof of Theorem \ref{thm:unilateral-norm-general}]
The proof for unilateral or bilateral shifts is the same.  Let $T:\cM\to \cM$ be a bilateral shift. Then there exists a generating wandering subspace $\mathcal V$ such that
\[
\cH=\cM\oplus \cM^\perp=\bigg(\bigoplus_{k\in\mathbb Z}T^k\big(\cV\big)\bigg)\oplus \cM^\perp.
\]
Let $\{\varphi_{j,k}\}_{j\in\mathbb X,k\in\mathbb Z}$ be an orthonormal basis of $\cM$ associated to $T$. Without losing generality, we assume that $f$ has only finitely many nonzero coefficients $\{\widehat f(j,k)\}_{j\in\mathbb X,k\in\mathbb Z}$ with respect to the orthonormal basis $\{\varphi_{j,k}\}_{j\in\mathbb X,k\in\mathbb Z}$. Since $T$ acts as the identity on $\cM^\perp$, we have 
\begin{align*}
 \frac{1}{N} \sumN T^n f-\Pi_{\cM^\perp}f &=  
  \frac{1}{N} \sumN \sum_{j,k}\widehat f(j,k)\varphi_{j,k+n}=\frac{1}{\sqrt N}\sum_{j,k}\widehat f(j,k)\Psi_{j,k}(N),
\end{align*}
where we have set
\[
\Psi_{j,k}(N)=\frac{1}{\sqrt N} \sumN \varphi_{j,k+n}.
\]
It can be readily checked that $\{\Psi_{j,k}(N)\}_{j\in\mathbb X}$ is an orthonormal system for every fixed $k\in\mathbb Z$. Hence, by Parserval's identity, \begin{align*}
\bigg\| \frac{1}{N}\sumN T^nf-\Pi_{\cM^\perp} f\bigg\|&=\bigg\|\frac{1}{\sqrt N}\sum_{j,k}\widehat f(j,k)\Psi_{j,k}(N)\bigg\|\\
&\leq \frac{1}{\sqrt N}\sum_{k\in\mathbb Z}     \bigg\|\sum_{j\in\mathbb X}\widehat f(j,k) \Psi_{j,k}(N)\bigg\|\\
&= \frac{1}{\sqrt{N}} \sum_{k\in\mathbb Z} \bigg(\sum_{j\in\mathbb X}|\widehat f(j,k)|^2\bigg)^{\frac12}= \frac{1}{\sqrt{N}} \sum_{k\in\mathbb Z} \|\Pi_kf\|.
\end{align*}

Finally, observe that if $f=\Pi_{k}f$ for a single $k$, then all the above inequalities actually are identities. Hence, the theorem is sharp.
\end{proof}

The proof of Theorem \ref{thm:maximal-unilateral-general} is in principle similar to the proof of Theorem \ref{thm:unilateral-norm-general}. The main ingredient is the Rademacher--Menshov theorem, which we now recall. 
\begin{thm}[Rademacher--Menshov]
    There exists an absolute positive constant $C$ such that for every positive measure space $ (X,\mu)$ and every orthogonal system $f_0, f_1 \dots  $ in $L^2(X,\mu)$, the maximal function 
    \[ \mathcal{M}(x): = \sup_{ k \geq 0 } \bigg| \sum_{n=0}^k f_n(x) \bigg| \]
    satisfies the estimate
    \[ \Vert \mathcal{M} \Vert _{L^2(X,\mu)} \leq C  \bigg( \sum_{n=0}^{+\infty}  \log^2(n+2) \Vert f_n \Vert^2_{L^2(X,\mu)} \bigg)^{\frac12}. \]
\end{thm}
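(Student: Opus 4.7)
The plan is to establish the Rademacher--Menshov inequality by a two-level dyadic block decomposition: first a ``small scale'' decomposition of each finite partial sum into dyadic sub-blocks (which produces the classical $\log N$ bound), then a ``large scale'' grouping of the orthogonal system itself into dyadic blocks $B_j = \{f_n : 2^{j-1} \le n+1 < 2^j\}$ (which converts the outer $\log N$ loss into the weight $\log^2(n+2)$ inside the sum). By monotone convergence applied to $\mathcal{M}$, we may assume from the start that only finitely many $f_n$ are nonzero, so that all sums below are finite.

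The key technical step is the following finite-range lemma: for any orthogonal system $g_0, \dots, g_{2^m-1}$, one has
\[
\Bigl\Vert \sup_{0 \le k < 2^m} \Bigl| \sum_{n=0}^{k} g_n \Bigr|\Bigr\Vert_{L^2(X,\mu)} \le C\, m \Bigl( \sum_{n=0}^{2^m-1} \Vert g_n \Vert^2 \Bigr)^{1/2}.
\]
The proof is the standard dyadic argument: for every $k < 2^m$ one writes $k+1$ in binary and expresses $\sum_{n=0}^{k} g_n$ as a sum of at most $m$ block sums $B_I = \sum_{n \in I} g_n$ over pairwise disjoint dyadic intervals $I$ of distinct lengths $2^0, 2^1, \dots, 2^{m-1}$. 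Setting $M_\ell(x) := \sup_{|I|=2^\ell} |B_I(x)|$, the triangle inequality gives $\sup_k |\sum_{n \le k} g_n| \le \sum_{\ell=0}^{m-1} M_\ell$, while orthogonality plus the disjointness of dyadic intervals of a fixed length yields $\Vert M_\ell \Vert_2^2 \le \sum_I \Vert B_I \Vert_2^2 = \sum_n \Vert g_n \Vert^2$. Summing over $\ell$ gives the lemma.

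With the lemma in hand, partition the indices into dyadic blocks $B_j$ as above (so that $\log(n+2) \asymp j$ for $n \in B_j$) and set $\sigma_j := \sum_{n \in B_j} f_n$. For each $k$, letting $j(k)$ denote the block containing $k$, decompose
\[
\sum_{n=0}^{k} f_n \;=\; \sum_{i < j(k)} \sigma_i \;+\; R_k, \qquad R_k := \sum_{\substack{n \in B_{j(k)} \\ n \le k}} f_n,
\]
so $\mathcal{M} \le \sup_{j} \bigl| \sum_{i<j} \sigma_i \bigr| + \sup_{j} \sup_{k \in B_j} |R_k|$. The first term is handled by Cauchy--Schwarz with weights $(i+1)^{-2}$, which yields the pointwise bound $|\sum_{i<j} \sigma_i|^2 \le C \sum_{i \ge 0} (i+1)^2 |\sigma_i|^2$ uniformly in $j$; orthogonality of the $\sigma_i$'s then gives $\Vert \sup_j |\sum_{i<j}\sigma_i| \Vert_2^2 \lesssim \sum_i (i+1)^2 \sum_{n \in B_i} \Vert f_n \Vert^2 \lesssim \sum_n \log^2(n+2) \Vert f_n \Vert^2$. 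The second term is handled block by block: within $B_j$ the lemma produces a bound of $C j \bigl(\sum_{n \in B_j} \Vert f_n \Vert^2\bigr)^{1/2}$, hence taking a further $\ell^2$-sum over $j$ (to dominate the supremum by the square-function) yields exactly $\sum_j j^2 \sum_{n \in B_j} \Vert f_n \Vert^2 \lesssim \sum_n \log^2(n+2) \Vert f_n \Vert^2$.

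The main obstacle is the bookkeeping in the finite-range lemma, namely checking that the binary expansion of $k+1$ decomposes $\{0,1,\dots,k\}$ into at most $m$ disjoint dyadic intervals of pairwise distinct lengths, and then exploiting this to dominate the pointwise supremum by a sum of only $m$ dyadic maximal functions (rather than $2^m$ of them). Once this combinatorial reduction is in place, the rest of the proof is a routine combination of Cauchy--Schwarz, orthogonality, and the comparison $\log(n+2) \asymp j$ on $B_j$.
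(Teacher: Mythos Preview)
The paper does not actually prove the Rademacher--Menshov theorem: it is quoted as a classical result (with a reference for the discussion of the absolute constant) and then used as a black box in the proof of Theorem~\ref{thm:maximal-unilateral-general}. So there is no ``paper's own proof'' to compare your proposal against.

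That said, your argument is a correct rendition of the standard two-scale dyadic proof. The finite-range lemma is right: the binary expansion of $k+1$ does decompose $\{0,\dots,k\}$ into at most $m$ dyadic intervals of pairwise distinct lengths, and the bound $\Vert M_\ell\Vert_2^2 \le \sum_n \Vert g_n\Vert_2^2$ follows from orthogonality plus disjointness at each fixed scale. The large-scale decomposition into blocks $B_j$ and the treatment of the full-block part via pointwise Cauchy--Schwarz with weights $(i+1)^{-2}$, together with the block-by-block application of the lemma for the remainders, are also correct. One minor wording issue: where you write ``orthogonality of the $\sigma_i$'s'' in the first-term estimate, what you are really using is orthogonality of the $f_n$'s within each block to obtain $\Vert \sigma_i\Vert_2^2 = \sum_{n\in B_i}\Vert f_n\Vert_2^2$; the pointwise Cauchy--Schwarz step does not use mutual orthogonality of the $\sigma_i$. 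This is cosmetic and does not affect the validity of the argument.
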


It is important to emphasize that the constant $C$ in the above theorem is absolute and we refer the reader to \cite{M} for a discussion on this. 

Recall also the next lemma by Kronecker, which is an application of Abel's summation by parts formula.

\begin{lem} \label{}
    Suppose that $a_n$ is a sequence of complex numbers such that $\sum_{n=1}^\infty a_n$, exists and is finite. Assume also that $b_n$ is a  nondecreasing sequence of  positive numbers tending to infinity. Then,
    \[ \lim_{N \to \infty} \frac{1}{b_N} \sum_{n=0}^{N-1}b_n a_n = 0. \]
\end{lem}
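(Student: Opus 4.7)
The plan is to prove this via Abel summation by parts, exactly as the lemma statement hints. Let $S_n := \sum_{k=0}^{n} a_k$ denote the partial sums, with the convention $S_{-1}:=0$; by hypothesis $S_n\to S$ for some finite $S\in\bC$, and $a_n = S_n - S_{n-1}$.

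First I would apply summation by parts to rewrite
\[
\sum_{n=0}^{N-1} b_n a_n = \sum_{n=0}^{N-1} b_n(S_n-S_{n-1}) = b_{N-1}S_{N-1} - \sum_{n=0}^{N-2}(b_{n+1}-b_n)S_n,
\]
using $S_{-1}=0$. Dividing through by $b_N$, which is positive and tends to infinity,
\[
\frac{1}{b_N}\sum_{n=0}^{N-1} b_n a_n = \frac{b_{N-1}}{b_N}S_{N-1} \;-\; \frac{1}{b_N}\sum_{n=0}^{N-2}(b_{n+1}-b_n)S_n.
\]
The crucial observation is that, since $b_n$ is nondecreasing, the coefficients $b_{n+1}-b_n$ are nonnegative and telescope to $b_{N-1}-b_0$. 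Thus the right-hand side is, up to negligible corrections, a weighted average of $\{S_n\}$, and since $S_n\to S$ the weighted averages should converge to $S$, with the two appearances of $S$ cancelling.

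To make this precise I would add and subtract $S$. Write
\[
\frac{1}{b_N}\sum_{n=0}^{N-2}(b_{n+1}-b_n)S_n = \frac{(b_{N-1}-b_0)S}{b_N} + \frac{1}{b_N}\sum_{n=0}^{N-2}(b_{n+1}-b_n)(S_n-S),
\]
and $\tfrac{b_{N-1}}{b_N}S_{N-1} = \tfrac{b_{N-1}}{b_N}(S_{N-1}-S) + \tfrac{b_{N-1}}{b_N}S$. The two terms proportional to $S$ cancel up to the error $S\,b_0/b_N\to 0$, and $\tfrac{b_{N-1}}{b_N}(S_{N-1}-S)\to 0$ since $b_{N-1}/b_N\le 1$ and $S_{N-1}\to S$. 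It remains to control the residual sum: given $\varepsilon>0$, choose $M$ so large that $|S_n-S|<\varepsilon$ for all $n\ge M$, and split at $M$. The tail $n\ge M$ contributes at most $\varepsilon(b_{N-1}-b_M)/b_N\le\varepsilon$ by telescoping, while the head is a fixed finite quantity divided by $b_N\to\infty$, hence $o(1)$. Letting $\varepsilon\to 0$ after taking $N\to\infty$ yields the desired limit.

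There is no real obstacle here; the proof is classical and uses nothing beyond the convergence of $\sum_n a_n$, the positivity of $b_{n+1}-b_n$, and $b_N\to\infty$. The one subtlety worth flagging is that one must not assume $b_{N-1}/b_N\to 1$ (which would already fail for $b_n=2^n$): the argument only ever uses $b_{N-1}/b_N\le 1$ and the telescoping identity, so it applies verbatim to arbitrarily fast-growing sequences $\{b_n\}$.
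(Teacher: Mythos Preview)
Your proof is correct and follows precisely the approach indicated in the paper, which does not give a detailed proof but simply attributes the lemma to Kronecker and notes that it ``is an application of Abel's summation by parts formula.'' Your write-up is a clean execution of exactly that idea, and your remark that only $b_{N-1}/b_N\le 1$ (not $b_{N-1}/b_N\to 1$) is needed is a worthwhile point of care.
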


\begin{proof}[Proof of Theorem \ref{thm:maximal-unilateral-general}]
We assume again that $T$ is a bilateral shift. The proof for the unilateral case is the same. To simplify the notation we also assume that $f$ is in $\cM$, so that $\Pi_{\cM^\perp}f=0$. Finally, assume that $f$ has only finitely many nonzero Fourier coefficients. Then,
\begin{align*}
 N\varepsilon(N)\bigg(\frac{1}{N} \sumN T^{n}f(x)- \Pi_{\cM^\perp}f(x)\bigg)=  \varepsilon(N)\sumN T^n f (x).
 \end{align*}

We derive both \eqref{main-maximal-estimate} and \eqref{2} from the boundedness of an auxiliary maximal function. Let $\varepsilon : [0,+\infty) \to \mathbb{R}$, not necessarily decreasing, and define 
 \[  \Tilde{S}f(x) := \sup_{N \geq 1} \Big| \sum_{n=0}^{N-1} \varepsilon(n) T^nf(x) \Big|. \]
We have
\begin{align*}
\sumN \varepsilon(n) T^n f(x)&=\sumN \varepsilon(n) \sum_{j\in\mathbb X,k\in\mathbb Z}\widehat f(j,k)\varphi_{j,k+n}(x)= \sum_{k\in\mathbb Z} A(k)\sumN \varepsilon(n)\Phi(k,n,x)
\end{align*}
where we have set
\[
A(k)=\|\Pi_k f\|_{L^2_\mu}=\bigg(\sum_{j\in\mathbb X}|\widehat f(j,k)|^2\bigg)^{\frac 12},\qquad\Phi(k,n,x)=\frac{1}{A(k)}\sum_{j\in\mathbb X}\widehat f(j,k)\varphi_{j,k+n}(x).
\]
Then,  
\begin{align*}
\begin{split}
\sup_{N\geq 1}\bigg|&  \sumN  \varepsilon(n)T^n f (x)\bigg|= \sup_{N\geq 1}\bigg|\sum_{k\in\mathbb Z} A(k)\sumN \varepsilon(n)\Phi(k,n,x)\bigg|.
\end{split}
\end{align*}
In the above formula we simply omit the terms such that $A(k)=0$. It may be promptly verified  that $\{\Phi(k,n,x)\}_{n=0}^{N-1}$ is an orthonormal system for every fixed $k\in\mathbb Z$ and $N\in\mathbb N$.  
Hence, by means of the Rademacher--Menshov theorem,
\begin{align} \label{max_ineq}
\Vert \Tilde{S}f \Vert_{L^2_\mu}& = \bigg(\int_{X}\bigg(\sup_{N\geq 1}\bigg|
\sumN  \varepsilon(n)T^n f (x)\bigg|\bigg)^2\, d\mu(x)\bigg)^{\frac 12}\nonumber \\
&\leq \sum_{k\in\mathbb Z} A(k) \bigg(\int_{X}\bigg(\sup_{N\geq 1}\bigg|\sumN \varepsilon(n)\Phi(k,n,x)\bigg|\bigg)^2\, d\mu(x)\bigg)^{\frac 12}\nonumber \\
&\leq c \bigg(\sum_{n\in\mathbb N}\varepsilon^2(n) \log^2(n+2)\bigg)^{\frac12}\sum_{k\in\mathbb Z} \| \Pi_k f\|_{L^2_\mu}.
\end{align}
Now a standard argument, as in \cite[p. 190]{Z2003}, shows that inequality \eqref{max_ineq} implies that the series $\sum_{n=0}^\infty \varepsilon(n)T^nf(x)$ converges $\mu$-a.e. Moreover, restricting to a positive decreasing $\varepsilon$, we apply Kronecker's lemma with $a_n= \varepsilon(n) T^nf(x)$, $ b_n =\varepsilon^{-1}(n)$ and we have that 
\[ \lim_{N\to \infty} \varepsilon(N)\Big| \sum_{n=0}^{N-1}T^nf(x)\Big| = 0, \quad \mu \text{-a.e.,}\]
which proves \eqref{2}.

In order to prove \eqref{main-maximal-estimate}, assume again that $\varepsilon$ is positive and decreasing. Then, by Abel's summation by parts,

\begin{align*} 
\varepsilon(N)\sumN T^nf(x)&=    \frac{ \varepsilon(N)}{\varepsilon(N-1)}\sumN \varepsilon(n) T^nf(x)\\
&\quad\quad- \varepsilon(N)\sum_{j=0}^{N-2}\bigg(\sum_{n=0}^{j}\varepsilon(n) T^nf(x)\bigg)\bigg(\frac{1}{\varepsilon(j+1)}-\frac{1}{\varepsilon(j)}\bigg).
\end{align*}
Hence, 
\begin{align}\label{eq:max-comp}
\begin{split}
Sf(x) = \sup_{N\geq 1}\varepsilon(N) \bigg|\sumN T^{n}f(x)\bigg|\leq 2\sup_{N\geq 1}\bigg|\sumN \varepsilon(n) T^{n}f(x)\bigg| =2 \, \Tilde{S}f(x).
\end{split}
\end{align}
This, together with \eqref{max_ineq} proves \eqref{main-maximal-estimate}.
\end{proof} 

We conclude the section showing that the hypothesis of Theorem \ref{thm:maximal-unilateral-general} are satisfied in the setting of Corollary \ref{cor:baker} and Corollary \ref{cor:Laguerre}.
\begin{proof}[Proof of Corollary \ref{cor:baker}]
One can
verify that the composition operator $T_Bf(x,y)=f(B(x,y))$ is a bilateral shift with respect to the product Walsh system on the square $[0,1)^2$, whose definition we now recall. Let $r_k$ be the one-dimensional $k$-th Rademacher function
\[
r_k(x)=\textrm{sgn}\big(\sin(2^{k}\pi x)\big),\qquad k\in\mathbb N, x\in[0,1).
\]
On the unit square $[0,1)^2$ define the function
\[
R_{k}(x,y):=\begin{cases}
r_{k+1}(x) & k=0, 1,2,\ldots \\
r_{|k|}(y) & k=-1,-2,\ldots \end{cases}
\]
and for every set of integers $k_1<k_2<\ldots< k_n$ define
\[
W_{k_1k_2\cdots k_n}(x,y)= R_{k_1}(x,y)\cdots R_{k_n}(x,y).
\]
Then,
\[
L^2_0([0,1)^2)=\overline{\SPAN}\bigg\{W_{k_1k_2\cdots k_n}: k_1<k_2<\ldots<k_n, k_j\in\mathbb Z, n\in\mathbb N \bigg\},
\]
where $L^2_0([0,1)^2)$ is the subspace of $L^2([0,1)^2)$ consisting of functions with vanishing mean.
One can verify that\[
T(W_{k_1k_2\cdots k_n})= W_{(k_1+1)(k_2+1)\cdots(k_n+1)}.
\]
Hence, the transformation $T$ is a bilateral shift on $L^2_0([0,1)^2)$ with a generating wandering subspace given by
\[
\cV= \overline{\SPAN}\bigg\{W_{1 k_2\cdots k_n}, 1<k_2<k_3<\ldots k_n, k_j\in\mathbb Z, n\in\mathbb N\bigg\}.
\]
Then, Theorem \ref{thm:maximal-unilateral-general} applies.
\end{proof}

\begin{proof}[Proof of Corollary \ref{cor:Laguerre}] Recall the definition of Laguerre polynomials $\{L_n\}_{n\in\mathbb N}$,
\[
L_n(x)=\frac{e^x}{n!}\frac{d^n}{dx^n}(e^{-x}x^n)=\sum_{k=0}^n \binom{n}{k}\frac{(-1)^k}{k!}x^k.
\]
This family of polynomials is an orthonormal basis for the Hilbert space $L^2(\mathbb R_+, e^{-x} dx)$. As observed by Von Neumann \cite{vN2} (see also Brown and Halmos \cite[p. 135]{Brown1965}), the operator 
\[
Tf(x) = f(x) - \int_0^x f(y)dy \] 
is the unilateral shift with respect to the Laguerre basis of $L^2(\mathbb R_+, e^{-x} dx)$. Indeed,
\[TL_n(x)=\sum_{k=0}^n \binom{n}{k}\frac{(-1)^k}{k!}x^k+\sum_{k=0}^n \binom{n}{k}\frac{(-1)^{k+1}}{(k+1)!}x^{k+1}=\sum_{k=0}^{n+1} \binom{n+1}{k}\frac{(-1)^k}{k!}x^k=L_{n+1}(x).\]
Hence,  Theorem \ref{thm:maximal-unilateral-general} applies.
\end{proof}

\section{Speed of convergence for toral endomorphisms}
\label{section-mat-2}

Before actually proving Theorem \ref{thm:2x2matrix-complete} and Corollary \ref{cor:A}, we make some preliminary observations. If in Theorem $\ref{thm:2x2matrix-complete}$ we choose a matrix $A$ with $|\det A|>1$, then the operator $T_A f= f\circ A$ is a unilateral shift on $L^2_0(\mathbb T^2)$, the space of square integrable functions with vanishing means. This is proved, e.g., in \cite{K}, but it will also follow from the proof of Lemma \ref{lem:detA_geq_1} below. If, on the other hand, $|\det A|=1$, then $T_A$ is a bilateral shift on $L_0^2(\mathbb T^2)$. A generating wandering subspace for $T_A$ can be constructed as follows. Let us consider equivalence relation on $\mathbb Z^2\setminus\{0\}$ defined by the orbits of $A^*$, i.e.
\begin{equation*}
\xi\sim \mu \iff \exists k\in\mathbb Z : A^{*k}\xi =\mu.
\end{equation*}
Let now $\mathcal E$ be the set containing of representative from each equivalence class of $\mathbb Z^2\setminus\{0\}/ \sim$. A generating wandering subspace $\mathcal {V}_{\mathcal E}$ for $T_A$ is then given by
\begin{equation}\label{eq:generating_wandering}
\mathcal V_{\mathcal E}=\{f\in\ L^2(\mathbb T^d):\supp\widehat  f\subseteq \mathcal E\}.
\end{equation}
The proof of Theorem \ref{thm:2x2matrix-complete} will follow from a series of preparatory results. In particular, we deal with  the cases $|\det A|>1$ and $|\det A|=1$ in different ways. In this latter case we will have to be more careful in constructing a generating wandering subspace $\mathcal V_{\mathcal E}$, which we recall is not unique for bilateral shifts.
\subsection{Proof of Theorem \ref{thm:2x2matrix-complete}: \underline{case $|\det A|=1$}} Let $\tr(A)$ be the trace of the matrix $A$. Observe that if $\det(A)=1$  the eigenvalues of $A$ are given by
\[
\frac{\tr(A)\pm \sqrt{\tr^2(A)-4}}{2}.
\]
Since no eigenvalue of $A$ is a root of unity by hypothesis, we can assume that $|\tr(A)|>2$. Otherwise, that is, if $\tr(A)=0,\pm 1,\pm 2$, it can be checked by hand that the eigenvalues of $A$ are roots of unity and in this case Birkhoff's theorem would not apply since the matrix $A$ would not be ergodic (see \cite{K}). If $\det(A)=-1$, then the eigenvalues of $A$ are given by
\[
\frac{\tr(A)\pm \sqrt{\tr^2(A)+4}}{2}.
\]
Notice that these are roots of unity if and only if $\tr(A)=0$. In all remaining cases we have two distinct eigenvalues $\lambda, \lambda^{-1} \in \bR $ and, without loss of generality, we can assume that $0<|\lambda|^{-1}<1<|\lambda|$. We now take advantage of this to define a suitable generating wandering subspace for the bilateral shift $T_A$. Let $S\in \GL_2(\bR)$ be such that 
\[ A = S^{-1} \begin{bmatrix}
    \lambda^{-1} & 0 \\ 0 & \lambda
\end{bmatrix} S= S^{-1}DS .
\] 
Let $\mathcal E\subseteq \mathbb Z^2\backslash\{0\}$ be such that it contains exactly one element from each orbit of the action of $A$ on $\mathbb Z^2$. We choose such element as follows. Define $|\xi|_\infty=|(\xi_1,\xi_2)|_\infty=\max\{|\xi_1|,|\xi_2|\}$. Let $\mathcal O$ an orbit of $A$
in $\mathbb{Z}^2\setminus \{ 0\}$ and consider the set $S \mathcal{O}$. Then, we choose $\xi \in \mathcal{O} $ such that $S\xi$ has the minimal $ | \cdot |_\infty $ norm. Equivalently, for all $k\in \bZ$, we have that
\begin{equation}\label{eq_E}
   |S A^k \xi |_\infty  = |D^k S \xi |_\infty \geq | S \xi|_\infty.
\end{equation}
Then, a generating wandering subspace for $T_A$ is defined as in \eqref{eq:generating_wandering}. 

Using the notation above, we prove the following.

\begin{lem}\label{lem:detA_1}
Let $A$ be a $2\times2$ integer matrix such that $|\det A|=1$ and no eigenvalues of $A$ is a root of unity. Let $\mathcal E$ be defined as above. Then, there exist constants $c>0$ and $q>1$ such that, for every $k\in\mathbb Z$,
\[
\min\{|A^k\xi|: \xi \in  \mathcal E \}\geq c q^{|k|}.
\]
\end{lem}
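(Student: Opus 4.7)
The plan is to reduce the statement to a lower bound for $|D^k S\xi|_\infty$ by using the norm equivalence between $|A^k\xi|$ and $|D^k S\xi|_\infty$, then to exploit the selection rule \eqref{eq_E} to show that the two coordinates of $S\xi$ are both comparable to $|S\xi|_\infty$, and finally to notice that $|S\xi|_\infty$ is bounded below uniformly in $\xi\in\mathbb Z^2\setminus\{0\}$.

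First I would fix $\xi\in\mathcal E$ and write $v=S\xi=(a,b)$. Since $A^k=S^{-1}D^kS$, one gets
\[
|A^k\xi|\;\geq\;\|S\|^{-1}\,|D^k v|\;\geq\;\|S\|^{-1}\,|D^k v|_\infty
\;=\;\|S\|^{-1}\,\max\bigl(|\lambda|^{-k}|a|,\,|\lambda|^k|b|\bigr),
\]
so it is enough to prove $\max(|\lambda|^{-k}|a|,|\lambda|^k|b|)\geq c' q^{|k|}$ for uniform constants.

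The key step is to extract from the selection rule \eqref{eq_E} (which says that $|v|_\infty$ is the minimum of $|D^k v|_\infty$ over $k\in\mathbb Z$) that $a$ and $b$ are both comparable to $|v|_\infty$. Testing \eqref{eq_E} at $k=1$ gives $\max(|\lambda|^{-1}|a|,|\lambda||b|)\geq|v|_\infty$, and testing it at $k=-1$ gives $\max(|\lambda||a|,|\lambda|^{-1}|b|)\geq|v|_\infty$. Splitting on whether $|a|\geq|b|$ or $|b|\geq|a|$, each of these two inequalities forces the smaller coordinate to dominate the larger by at most a factor $|\lambda|$, hence
\[
|a|,\,|b|\;\geq\;|\lambda|^{-1}|v|_\infty.
\]
Combined with the trivial bound $|v|_\infty\geq |v|/\sqrt2\geq (\sqrt2\,\|S^{-1}\|)^{-1}$ valid for any $\xi\in\mathbb Z^2\setminus\{0\}$ (because $|\xi|\geq 1$), this yields a uniform lower bound $|a|,|b|\geq c_0$ depending only on $S$ and $\lambda$.

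With this in hand the conclusion is immediate: for $k\geq 0$ we use $|D^k v|_\infty\geq|\lambda|^k|b|\geq c_0|\lambda|^k$, and for $k\leq 0$ we use $|D^kv|_\infty\geq|\lambda|^{-k}|a|\geq c_0|\lambda|^{|k|}$, giving $|A^k\xi|\geq (c_0/\|S\|)\,|\lambda|^{|k|}$ with $q:=|\lambda|>1$. I do not expect a real obstacle here; the only subtlety is checking that the constants produced by the argument are independent of $\xi\in\mathcal E$, which is guaranteed once we know $|v|_\infty\geq c_0/\|S^{-1}\|$ uniformly on $\mathbb Z^2\setminus\{0\}$.
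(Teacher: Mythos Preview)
Your proposal is correct and follows essentially the same route as the paper: reduce to bounding $|D^kS\xi|_\infty$ via $A^k=S^{-1}D^kS$, use the minimality condition \eqref{eq_E} at $k=\pm1$ to show that both coordinates of $S\xi$ are at least $|\lambda|^{-1}|S\xi|_\infty$, and then invoke the uniform lower bound on $|S\xi|_\infty$ coming from $\xi\in\mathbb Z^2\setminus\{0\}$. The paper phrases the final step as $|D^k\eta|_\infty\geq\min\{|\eta_1|,|\eta_2|\}\,|\lambda|^{|k|}$ while you split into $k\geq0$ and $k\leq0$, but this is purely cosmetic.
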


\begin{proof}
Assume that $\det A=1$; the case $\det A=-1$ is similar. Since for every $\xi \in \bZ^2\setminus \{ 0 \}$ and $k\in \bZ$ it holds that
\[
|A^k\xi|=|S^{-1} D^k S \xi|\geq \|S\|^{-1} |D^k S\xi|,
\]
and all norms in a finite dimensional vector space are equivalent, it suffices to show that 
 there exist $c>0$, $q>1$
such that $| D^{k}S\xi |_{\infty }\geq cq ^{|k| }$ for every $\xi \in \mathcal E $. Let $\eta=(\eta_1,\eta_2)=S\xi$ where $\xi$ is in $\mathcal E$ and let $\lambda^{-1},\lambda$ the two real eigenvalues of $A$ with $|\lambda|>1$.
Then, 
\[
|D^k\eta|_{\infty}=|(\lambda^{-1}\eta_1,\lambda^k \eta_2)|_{\infty}\geq |\lambda|^k |\eta_2|\geq \min\{|\eta_1|,|\eta_2|\}|\lambda|^k
\]
and, similarly,
\[
|D^k\eta|_{\infty}=|(\lambda^{-1}\eta_1,\lambda^k \eta_2)|_{\infty}\geq |\lambda|^{-k} |\eta_1|\geq \min\{|\eta_1|,|\eta_2|\}|\lambda|^{-k}.
\]
Hence,
\begin{equation}\label{Dk_below}
|D^k\eta|_\infty\geq \min\{|\eta_1|,|\eta_2|\}|\lambda|^{|k|}.
\end{equation}
The conclusion will follows once we prove that $\min\{|\eta_1|,|\eta_2|\}$ is bounded from below uniformly for $\eta=(\eta_1,\eta_2)$ in $\mathcal E$. But this is true because of the following. If $|\eta_2|\leq |\eta_1|$,  by the definition of $\mathcal E$, 
\[
|(\eta_1,\eta_2)|_\infty \leq |D(\eta_1,\eta_2)|_{\infty}=|(\lambda^{-1}\eta_1, \lambda \eta_2)|_\infty=|\lambda| |\eta_2|.
\]
The last identity holds since if $\left\vert \left( \lambda ^{-1}\eta _{1},\lambda \eta _{2}\right)
\right\vert _{\infty }=|\lambda^{-1}||\eta_1|$, then we would have $|\eta_1|\leq |\lambda^{-1}||\eta_1|$, which is a contradiction since $|\lambda^{-1}|<1$ and $\eta_1 \neq 0$. 
Similarly, if $\left\vert \eta _{1}\right\vert \leq \left\vert \eta _{2}\right\vert $,
\[
|(\eta_1,\eta_2)|_\infty \leq |D^{-1}(\eta_1,\eta_2)|_{\infty}=|(\lambda\eta_1, \lambda^{-1} \eta_2)|_\infty=|\lambda| |\eta_1|.
\]
Hence, $|(\eta_1,\eta_2)|_\infty\leq |\lambda| \min\{|\eta_1|,|\eta_2|\}$, that is, $|\eta_1|$ and $|\eta_2|$ are comparable. Therefore, by \eqref{Dk_below},
\[
|D^k\eta|_\infty\geq \min\{|\eta_1|,|\eta_2|\}|\lambda|^{|k|}\geq |\eta|_\infty |\lambda|^{k-1}\geq c|\lambda|^k
\]
for some positive constant $c$. This follows from the fact that $\eta\in S\mathbb Z^2\backslash\{0\}$.
\end{proof}

We now conclude the proof of Theorem \ref{thm:2x2matrix-complete} in the case $|\det A|=1$. As observed at the beginning of Section \ref{section-mat-2}, the operator $T_A f(x)=f(Ax)$ is a bilateral shift on $L^2_0(\mathbb T^2)$ with a generating subspace given by $\mathcal V_{\mathcal E}$ as in \eqref{eq:generating_wandering} where $\mathcal E$ is defined by means of the property $\eqref{eq_E}$. 
Hence, Theorem \ref{thm:maximal-unilateral-general} applies and, in particular, it applies with $\varepsilon(n)=(n+1)^{-\frac12}(\log(2+n))^{-\frac32- \eta}$ for any $\eta>0$. 

Set now $\mathcal{F}_k :=(A^{*})^k\mathcal E$. Observe that $A$ satisfies the hypothesis of Lemma \ref{lem:detA_1} if and only if $A^*$ does. Hence, by such lemma, there exist constants $c>0$ and $q>1$ such that, for every $k\in\mathbb Z$,
\[
\min\{|\xi|:\xi\in (A^*)^k\mathcal E\backslash\{0\}\}\geq cq^{|k|}. 
\]

Hence, for every positive increasing function $\nu$ and $f$ satisfying \eqref{condition_Fourier}, one has
\begin{align*}
\sum_{k\in\mathbb Z \cup \{ 0\} }\|\Pi_k f\|_{L^2}&=\sum_{k\in\mathbb Z \cup \{ 0\}}\bigg(\sum_{\xi \in \mathcal F_k }  |\widehat f(\xi)|^2\bigg)^{\frac12}\\
&\leq\bigg(\sum_{k\in\mathbb Z \cup \{ 0\}} \nu^{-2}(k)\bigg)^{\frac 12}\bigg(\sum_{k\in\mathbb Z \cup \{ 0\}}\nu^2(k)\sum_{\xi \in \mathcal{F}_k}|\widehat f(\xi)|^2\bigg)^{\frac12} \\
&\leq \bigg(\sum_{k\in\mathbb Z \cup \{ 0\}} \nu^{-2}(k)\bigg)^{\frac 12} \bigg(\sum_{k\in\mathbb Z \cup \{ 0\}}\sum_{\xi \in \mathcal{F}_k} \nu^2\Big( \frac{\log |\xi| - \log c}{\log q } \Big) |\widehat f(\xi)|^2\bigg)^{\frac12}.
\end{align*}
The conclusion follows choosing $\nu(t)=t^{\frac12+\frac\delta2}+1$.

 \subsection{Proof of Theorem \ref{thm:2x2matrix-complete}: \underline{ case $|\det A|>1$}} We want to prove the analogous of Lemma \ref{lem:detA_1} for a matrix $A$ with $|\det A|>1$.  However, we need a preliminary result, which is a special case of \cite[Lemma 3]{Katz}. The proof we provide here for the reader's convenience is essentially the same one as in \cite{Katz} adapted to the case $d=2$.
\begin{lem}\label{Katznelson_lemma} Let $A$ be a $2\times2$ integer matrix  with  a real irrational eigenvalue $\lambda$ and let $V_\lambda$ be its corresponding eigenspace. Then, there exists $C_A > 0 $ such that, for $\xi \in \bZ^2 \setminus \{0\}$,
\[ \vert \xi \vert \dist(\xi ,V_\lambda)  \geq C_A, \]
    where $\dist$ is the Euclidean distance between $\xi$ and $V_\lambda.$
\end{lem}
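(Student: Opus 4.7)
The plan is to reduce the desired lower bound to a classical Diophantine estimate on the slope of $V_\lambda$. Writing $A = \begin{pmatrix} a & b \\ c & d \end{pmatrix}$, the assumption that $\lambda$ is real and irrational forces $b \neq 0$, since otherwise $A$ would be triangular with integer eigenvalues. Choosing an eigenvector $v = (v_1, v_2)$ one checks that $v_1 \neq 0$, so that $\alpha := v_2/v_1$ is well-defined; substituting into $Av = \lambda v$ and eliminating $\lambda$ shows that $\alpha$ solves the integer quadratic $b X^2 + (a-d)X - c = 0$, and hence is a quadratic irrational. Since $V_\lambda = \{(t, \alpha t) : t \in \bR\}$, for every $\xi = (m,n) \in \bR^2$ one has
\[
\dist(\xi, V_\lambda) = \frac{|n - \alpha m|}{\sqrt{1 + \alpha^2}}.
\]

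The central step is then the Diophantine bound: there exists $C_1 > 0$, depending only on $A$, such that $|n - \alpha m| \geq C_1/|m|$ for all $(m,n) \in \bZ^2$ with $m \neq 0$. Setting $P(X) := b X^2 + (a-d)X - c$, the integer $m^2 P(n/m) = bn^2 + (a-d)nm - cm^2$ is nonzero, because $\alpha$ irrational means $P$ has no rational root, so $|P(n/m)| \geq 1/m^2$. When $|n/m - \alpha| \leq 1$, the mean value theorem yields $|P(n/m)| = |P(n/m) - P(\alpha)| \leq K |n/m - \alpha|$ for a constant $K$ depending only on $A$ and $|\alpha|$; combining gives $|n - \alpha m| \geq 1/(K|m|)$. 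The remaining case $|n/m - \alpha| > 1$ is immediate since then $|n - \alpha m| > |m| \geq 1$.

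To assemble the estimate, for $m \neq 0$ the trivial bound $|\xi| \geq |m|$ together with the Diophantine estimate gives $|\xi| \dist(\xi, V_\lambda) \geq C_1/\sqrt{1+\alpha^2}$, while for $m = 0$ (so $\xi = (0,n)$ with $n \neq 0$) one has $|\xi|\dist(\xi, V_\lambda) = n^2/\sqrt{1+\alpha^2} \geq 1/\sqrt{1+\alpha^2}$. Taking $C_A := \min(C_1, 1)/\sqrt{1+\alpha^2}$ concludes the argument. The only nontrivial ingredient is the Diophantine estimate, which is essentially Liouville's theorem on algebraic numbers specialised to degree two; everything else is elementary linear algebra and a one-variable mean value estimate, so I do not anticipate any further obstacle.
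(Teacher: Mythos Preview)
Your argument is correct. You parametrize $V_\lambda$ by its slope $\alpha$, show from the eigenvector equations that $\alpha$ is a root of the integer quadratic $bX^2+(a-d)X-c$ (with $b\neq 0$ forced by irrationality of $\lambda$), and then invoke Liouville's inequality for quadratic irrationals to get $|n-\alpha m|\geq C_1/|m|$; the assembly into the stated bound is clean.

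The paper proceeds differently. It never computes the slope of $V_\lambda$ or appeals to Liouville's theorem. Instead it approximates the eigenvalue $\lambda$ itself by a rational $r/q$ via Dirichlet's theorem, observes that $(qA-r)\xi$ is a nonzero integer vector (so $|(A-r/q)\xi|\geq 1/q$), and then splits $\xi$ into its projection onto $V_\lambda$ and the orthogonal remainder to bound $|(A-r/q)\xi|$ from above by a constant times $\dist(\xi,V_\lambda)$ plus $|\xi|/(qQ)$; choosing $Q\approx 2|\xi|$ finishes. The paper's proof is really the two-dimensional specialization of a lemma of Katznelson valid in any dimension: it works with the operator $A$ directly and never needs an explicit coordinate for the eigenspace, which is what makes it portable to $d>2$. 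Your approach, by contrast, is tailored to $d=2$ --- the reduction to a single slope and a scalar Liouville bound has no obvious analogue when the eigenspace has higher codimension --- but it is more self-contained, yields a fully explicit constant, and makes the Diophantine nature of the estimate completely transparent.
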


\begin{proof}
    By Dirichlet's theorem, for every $Q\in \bN$, there exists $q \in \bN,q\leq Q$ and $r\in \bZ$ such that 
    \[ \Big| \lambda - \frac{r}{q} \Big| < \frac{1}{q Q}. \]
Now, fix $\xi \in \bZ^2\setminus \{0 \}$ and notice that $ (q A-r)\xi \in \bZ^2 \setminus \{0\}$, so $ 1/q \leq \vert (A- r/q)\xi \vert $.  Let $y$ be the orthogonal projection of $\xi$ on $V_\lambda$. We have 
\begin{align*}
    \frac{1}{q} \leq & \Big \vert \Big(A-\frac{r}{q}\Big)\xi \Big\vert  = \Big\vert \Big(A-\frac rq \Big) (\xi-y) + \Big(\lambda-\frac rq \Big) y \Big\vert \\
    \leq & (\Vert A \Vert + |\lambda|+1 ) \dist(\xi,V_\lambda) + \frac{\vert \xi \vert}{ q Q }.
\end{align*}
Setting $C = \Vert A \Vert + |\lambda | +1 $ and rearranging the above inequality we get 
\[ \Big( 1-\frac{\vert \xi \vert}{Q} \Big) \leq C \dist(\xi ,V_\lambda) q \leq C \dist(\xi ,V_\lambda) Q.  \]
Setting $Q = \lceil 2 \vert \xi \vert\rceil$ we obtain the desired estimate. 
\end{proof}
 \begin{lem}\label{lem:detA_geq_1}.
Let $A$ be a $2\times 2$ integer matrix such that $|\det A|>1$ and no eigenvalue of $A$ is a root of unity. Then, there exist constants $c>0$ and $q>1$ such that, for every $k\in\mathbb N$
\[
\min\{|\xi|:\xi\in A^k\mathbb Z^2\backslash\{0\}\}\geq cq^k.
\]
\end{lem}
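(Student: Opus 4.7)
The plan is to case-split according to the nature of the eigenvalues of $A$. Since the characteristic polynomial $t^2 - \tr(A)t + \det(A)$ has integer coefficients, the eigenvalues $\lambda_1, \lambda_2$ are either a complex conjugate pair, two rational integers, or a pair of Galois conjugates in a real quadratic field. The hypotheses $|\det A|>1$ and ``no eigenvalue of $A$ is a root of unity'' then force $|\lambda_i|\neq 1$ in each case, more precisely: in the complex case $|\lambda_1|=|\lambda_2|=\sqrt{|\det A|}>1$; in the rational-integer case $|\lambda_i|\geq 2$ (since $\pm 1$ are the only integer roots of unity and $\det A\neq 0$); and in the real-irrational case $\lambda_1,\lambda_2$ are distinct, real, irrational, with either both satisfying $|\lambda_i|>1$ or exactly one satisfying $|\lambda_i|<1$ (the identity $|\lambda_1\lambda_2|=|\det A|>1$ rules out ``both small'').

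When $\min(|\lambda_1|,|\lambda_2|)>1$ (covering the complex case, the rational-integer case, and the non-degenerate sub-case of the real-irrational one), the statement follows directly by diagonalising $A=PDP^{-1}$ over $\bR$ — using the real canonical rotation-scaling form in the complex case, the Jordan form if $\lambda_1=\lambda_2\in\bZ$, and an ordinary diagonalisation otherwise. One obtains $\|A^{-k}\|\leq C_A\min(|\lambda_i|)^{-k}$, with at most an extra factor $k$ in the degenerate Jordan case, which is harmlessly absorbed by shrinking $q$. Then for $\xi\in\bZ^2\setminus\{0\}$,
\[
|\xi|\leq \|A^{-k}\|\cdot|A^k\xi|\leq C_A\min(|\lambda_i|)^{-k}|A^k\xi|,
\]
so $|A^k\xi|\geq c q^k$ with any $q\in(1,\min|\lambda_i|)$.

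The genuinely delicate case is the real-irrational one with $|\lambda_1|>1>|\lambda_2|>0$, where $A$ expands along $V_{\lambda_1}$ and contracts along $V_{\lambda_2}$. The key is to combine two individually insufficient bounds whose product cancels an unwanted $|\xi|$-factor. Since $V_{\lambda_2}$ is $A$-invariant and $A$ acts as multiplication by $\lambda_1$ on the transverse eigendirection, one checks $\dist(A^k\xi,V_{\lambda_2})=|\lambda_1|^k\dist(\xi,V_{\lambda_2})$; using that $\lambda_2\in\bR$ is irrational, Lemma \ref{Katznelson_lemma} applies and gives
\[
|A^k\xi|\geq \dist(A^k\xi,V_{\lambda_2})=|\lambda_1|^k\dist(\xi,V_{\lambda_2})\geq \frac{C_A|\lambda_1|^k}{|\xi|}.
\]
Separately, diagonalising $A=PDP^{-1}$ over $\bR$ yields $\|A^{-k}\|\leq \kappa(P)|\lambda_2|^{-k}$, so
\[
|A^k\xi|\geq \kappa(P)^{-1}|\lambda_2|^k|\xi|.
\]
Taking the geometric mean of these two lower bounds (the elementary inequality $\max(a,b)\geq \sqrt{ab}$ applied to the two right-hand sides) the $|\xi|$'s cancel and one gets
\[
|A^k\xi|\geq \sqrt{C_A/\kappa(P)}\,|\lambda_1\lambda_2|^{k/2}=\sqrt{C_A/\kappa(P)}\,|\det A|^{k/2},
\]
which is the claim with $q=\sqrt{|\det A|}>1$.

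The main obstacle is exactly this contracting sub-case. The Katznelson bound $C_A|\lambda_1|^k/|\xi|$ is strong when $|\xi|$ is moderate but deteriorates for $\xi$ close to the ``bad'' direction $V_{\lambda_2}$ (where $|\xi|$ is forced to be large by Lemma \ref{Katznelson_lemma}), whereas the singular-value bound $\kappa(P)^{-1}|\lambda_2|^k|\xi|$ is vacuous by itself since $|\lambda_2|<1$ and is only effective when $|\xi|$ is large. The two ranges dovetail precisely, and the multiplicative combination produces the clean exponential lower bound $c|\det A|^{k/2}$, valid uniformly in $\xi\in\bZ^2\setminus\{0\}$.
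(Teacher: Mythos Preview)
Your proof is correct and follows the same overall architecture as the paper: a case split on the nature of the eigenvalues, with Lemma~\ref{Katznelson_lemma} doing the real work in the hyperbolic case $|\lambda_2|<1<|\lambda_1|$. The execution differs in two places. In the hyperbolic case the paper introduces the oblique-projection norm $[\xi]_A=|P_\lambda\xi|+|P_\Lambda\xi|$, uses Katznelson to get $|P_\Lambda\xi|\geq c[\xi]_A^{-1}$, and then minimises the auxiliary function $f(t)=|\lambda|^k t+c(|\Lambda|^k-|\lambda|^k)t^{-1}$ over $t=[\xi]_A$; your geometric-mean combination of the Katznelson bound $|A^k\xi|\geq C_A|\lambda_1|^k/|\xi|$ with the singular-value bound $|A^k\xi|\geq \kappa(P)^{-1}|\lambda_2|^k|\xi|$ reaches the same conclusion $|A^k\xi|\geq c|\det A|^{k/2}$ more directly. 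In the non-diagonalisable case the paper exploits that $\lambda\in\bZ$ to argue first for $k$ divisible by $\lambda$ via a lattice inclusion $A^k\bZ^2\subseteq \lambda^k S^{-1}\bZ^2$, whereas your $\|A^{-k}\|\leq C k|\lambda|^{-k}$ bound (absorbing the polynomial factor into $q<|\lambda|$) is shorter and suffices since $|\lambda|\geq 2$. Both routes are valid; yours is a little more streamlined.
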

\begin{proof}
We study separately the cases when $A$ is diagonalizable and when it is not.  Denote by $\lambda, \Lambda \in \bC$ the eigenvalues of the matrix $A$ so that $|\lambda| \leq |\Lambda|$. Recall that $\det(A)=\lambda \Lambda $ is an integer different from $-1,1,0$. If these eigenvalues are complex, then they are conjugate to each other and $1<|\lambda| = |\Lambda|$.  If the eigenvalues are real, then, either $1<|\lambda|\leq |\Lambda| $ or  $|\lambda|<1<|\Lambda|$. In this last case $\lambda$ and $\Lambda$ cannot be rational, since the characteristic polynomial of $A$ is a monic polynomial with integer coefficients and any rational root of such polynomial is an integer. 
     
     \emph{\underline{A is diagonalizable and $1<|\lambda|\leq |\Lambda|$}}. In this case there exists $S\in \GL_2(\bC)$ such that for every $k\in \bN$
    \[ 
    A^k = S^{-1} \begin{bmatrix}
        \lambda^k & 0 \\ 0 & \Lambda^k
    \end{bmatrix} S.
    \]
     Therefore, for $\xi       \in\mathbb Z^2\setminus \{0\}$,
    \[ 
    \vert A^k \xi \vert \geq  \frac{1}{\Vert S \Vert} \Big|  \begin{bmatrix}
        \lambda^k & 0 \\ 0 & \Lambda^k
    \end{bmatrix} S \xi \Big|  \geq  \frac{ |\lambda  |^{k} |\xi|}{ \Vert S\Vert \, \Vert S^{-1} \Vert} \geq \frac{ |\lambda  |^{k}}{ \Vert S\Vert \, \Vert S^{-1} \Vert} ,
 \] 
and the claim is proved in this case. 

\emph{\underline{A is diagonalizable and $|\lambda| <1< |\Lambda|$.}}  Let $V_\lambda, V_\Lambda$ be the $1$-dimensional eigenspaces corresponding to $\lambda $ and $\Lambda$ respectively, let $\theta \in (0,\pi)$ be the angle between them and let $P_\lambda, P_\Lambda$ be the oblique projections with respect to the axes $V_\lambda, V_\Lambda$. Define a new norm in $\bR^2$ as follows, 
 \[ 
 [\xi]_A := \vert P_\lambda \xi \vert + \vert P_\Lambda \xi \vert. 
 \]
 This is of course equivalent to the Euclidean norm of $\bR^2$ up to multiplicative constants which depends on $A$. In what follows $c, C$ denote positive constants which depend only on $A$ and might change from appearance to appearance. Applying now  Lemma \ref{Katznelson_lemma} for some $\xi\in \bZ^2 \setminus \{0\}$, we have 
 \[
 \dist(\xi,V_\lambda) = |\sin(\theta)| \vert P_\Lambda \xi \vert \geq c \vert \xi \vert ^{-1} \geq c [\xi]_A^{-1}.
 \]
Hence,
\[
\vert P_\Lambda \xi \vert \geq c [ \xi ]_A^{-1}.
\]
Writing $\xi = P_\lambda \xi + P_\Lambda \xi $ and applying $A^k$ we obtain $A^k \xi = \lambda ^k P_\lambda \xi + \Lambda^k P_\Lambda \xi $. Hence, 
\begin{align*}
   C \vert A^k \xi \vert \geq [A^kx]_A & = |\lambda|^k \vert P_\lambda \xi \vert + |\Lambda|^k \vert P_\Lambda \xi \vert      \\[0.6em]
   & = |\lambda|^k( [\xi]_A - \vert P_\Lambda \xi\vert ) + |\Lambda|^k \vert P_\Lambda \xi \vert \\[0.6em]
   & = |\lambda|^k [\xi]_A + (|\Lambda|^k-|\lambda|^k) \vert P_\Lambda \xi \vert \\[0.5em]
  & \geq   |\lambda|^k [\xi]_A + c \frac{|\Lambda|^k-|\lambda|^k}{[\xi]_A} =: f([\xi]_A),
\end{align*}
where $f(t)=|\lambda|^k t + c(|\Lambda|^k-|\lambda|^k) t^{-1}, t>0$. Such function $f$ admits a global minimum at $t_{\min}$,
\[
t_{\min} = \sqrt{\frac{c(|\Lambda|^k - |\lambda|^k)}{|\lambda|^k}}, \quad f(t_{\min}) = 2 \sqrt{ c |\lambda|^k (|\Lambda|^k - |\lambda|^k)}. 
\]
For $k$ sufficiently large the estimate $f(t_{\min}) \geq c \sqrt{|\lambda|^k | \Lambda|^k} = c |\det(A)|^\frac k2 $ holds true, and this, combined with the above estimate, proves the claim.

\emph{\underline{A is not diagonalizable}}. In this case we have a single eigenvalue $\lambda$ with $2 \lambda = \tr(A)$ and  $\lambda^2=\det(A)$. Hence, $\tr(A)^2=4\det(A)$. This implies that $\tr(A)$ is an even integer, so that $\lambda \in \mathbb{Z}\setminus \{0\}$. The Jordan decomposition of $A$ guarantees that
\[
A= S^{-1}\begin{bmatrix}
    \lambda & 1 \\
    0 & \lambda
\end{bmatrix} S, 
\]
for some $S\in \GL_2(\bC)$. However, since the columns of $S$ are obtained by solving a homogeneous system of linear equations with integer coefficients, we can assume, without loss of generality, that $S$ has integer entries. Assume for the moment that $\lambda | k$, i.e., there exists $q\in \bZ$ such that $k=q \lambda$. Then, 
 \begin{equation*}
     A^k = S^{-1} \begin{bmatrix}
         \lambda^ k &  k \lambda^{k-1} \\
         0 & \lambda^k
     \end{bmatrix} S  = \lambda^k S^{-1} \begin{bmatrix}
         1 & q \\
         0 & 1
     \end{bmatrix} S.
 \end{equation*}
 Notice that $  U: = \begin{bmatrix}
    1 & q \\ 0 & 1 
 \end{bmatrix}$ is in $GL_2(\bZ)$. Therefore, 
 \begin{equation*}
     A^k \bZ^2 =\lambda ^ k S^{-1} U S \bZ^2 \subseteq \lambda ^ k S^{-1} U \bZ^2 = \lambda ^ k S^{-1} \bZ^2.
 \end{equation*}
    In particular, it follows that 
    \[
    \delta_k:=\min \big\{ \vert y \vert : y \in A^k \bZ^2 \setminus \{ 0 \} \geq \Vert S \Vert^{-1} |\lambda|^k,
    \]
    which proves the claim when $\lambda | k$. In general, let $k\equiv r\mod |\lambda|, 0\leq r < |\lambda|$. Then, $\delta_n \geq \delta_{k-r} \geq c |\lambda|^{k-r} \geq (c |\lambda|^{-|\lambda|}) |\det(A)|^{\frac {k}{2}}$, and this concludes the proof for a nondiagonalizable matrix $A$. 
  \end{proof}

 We now conclude the proof of Theorem \ref{thm:2x2matrix-complete} in the case $|\det A|>1$. Notice that $A$ satisfies the hypothesis of Lemma \ref{lem:detA_geq_1} if and only if $A^*$ does. 
By Wold's theorem the unitary part of the operator $T_A f = f\circ A$ acts on the subspace $\bigcap_{k\in\mathbb N\cup\{0\}} T^k_A(L_0^2(\mathbb T^2))$, but this intersection is trivial and this follows at once from the fact that $
\bigcap_{k\in\mathbb N\cup\{0\}} (A^*)^{k}\mathbb Z^2 =\{ 0\} $
since 
\[ 
\min\{ |\xi| : \xi \in (A^*)^{k} \bZ^2 \setminus \{ 0 \} \} \geq c q^k \to + \infty \textrm{ as } k\to+\infty,
\]
by Lemma \ref{lem:detA_geq_1} applied to $A^*$.
Therefore, $T_A$ is a unilateral shift with generating wandering subspace
\[
\cV=L^2_0(\mathbb T^2)\backslash T_A(L^2_0(\mathbb T^2))=\overline{\SPAN}\{e^{2\pi i  \xi \cdot x}\}_{\xi \notin A^*(\mathbb Z^2)}.
\]
Hence, Theorem \ref{thm:maximal-unilateral-general} applies and, in particular, it applies with $\varepsilon(n)=(n+1)^{-\frac12}(\log(2+n))^{-\frac32- \eta}$ for any $\eta>0$. The proof now proceeds as in the case of matrices with determinant $\pm 1$. Set $\mathcal{F}_k :=A^{*k}\mathbb Z^2 \setminus A^{*(k+1)}\mathbb Z^2$. By Lemma \eqref{lem:detA_geq_1} applied to $A^*$, for every positive increasing function $\nu$ and $f$ satisfying \eqref{condition_Fourier}, one has
\begin{align*}
\sum_{k\in\mathbb N \cup \{ 0\} }\|\Pi_k f\|_{L^2}&=\sum_{k\in\mathbb N \cup \{ 0\}}\bigg(\sum_{\xi \in \mathcal F_k }  |\widehat f(\xi)|^2\bigg)^{\frac12}\\
&\leq\bigg(\sum_{k\in\mathbb N \cup \{ 0\}} \nu^{-2}(k)\bigg)^{\frac 12}\bigg(\sum_{k\in\mathbb N \cup \{ 0\}}\nu^2(k)\sum_{\xi \in \mathcal{F}_k}|\widehat f(\xi)|^2\bigg)^{\frac12} \\
&\leq \bigg(\sum_{k\in\mathbb N \cup \{ 0\}} \nu^{-2}(k)\bigg)^{\frac 12} \bigg(\sum_{k\in\mathbb N \cup \{ 0\}}\sum_{\xi \in \mathcal{F}_k} \nu^2\Big( \frac{\log |\xi| - \log c}{\log q } \Big) |\widehat f(\xi)|^2\bigg)^{\frac12}.
\end{align*}
The conclusion follows choosing $\nu(t)=t^{\frac12+\frac\delta2}+1$. 
In only remains to prove Corollary \ref{cor:A}, but this is now immediate.
\begin{proof}[Proof of Corollary \ref{cor:A}]
Notice that, thanks to \eqref{svp}, we can repeat the very same argument of the proof of Theorem \ref{thm:2x2matrix-complete} to obtain the conclusion.
\end{proof}

\section{Concluding remarks}\label{final_remarks}

As mentioned in the introduction, functions satisfying condition \eqref{condition_Fourier} on their Fourier coefficients are, for instance, functions in any fractional Sobolev space. Here is another, more general, sufficient condition in terms of the $L^2$ integral modulus of continuity. 

 \begin{prop}\label{prop:continuity} Let $\omega (f,t) , t>0$, be the
modulus of continuity of the function $f \in L^{2}( 
\mathbb{T}^{d}) $,
\[
\omega (f,t):=\sup_{| y| \leq t}\Big( \int_{%
\mathbb{T}^{d}}| f(x+y)-f(x)| ^{2}dx\Big) ^{\frac{1}{2}}.
\]
Also let $\alpha \geq 0$. Then there exists a constant $c$ independent of $f$ such that 
\[
\sum_{\xi \in \mathbb{Z}^{d}}\log ^{\alpha }( 1+| \xi|)
 | \widehat{f}(\xi)| ^{2}\leq c\sum_{j=0}^{+\infty }( 1+j^{\alpha
}) \omega^{2}( f,2^{-j}) .
\]
\end{prop}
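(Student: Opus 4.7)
The plan is to convert the $L^{2}$-modulus of continuity into an upper bound on the dyadic Fourier energy, and then match scales $t=2^{-j}$ to the shells $A_{k}=\{\xi\in\mathbb{Z}^{d}:2^{k}\leq|\xi|<2^{k+1}\}$. The starting point is Parseval:
\[
\|f(\cdot+y)-f\|_{L^{2}}^{2}=4\sum_{\xi\in\mathbb{Z}^{d}}\sin^{2}(\pi\xi\cdot y)\,|\widehat{f}(\xi)|^{2}.
\]
For $t\in(0,1/2)$, so that the Euclidean ball $B_{t}\subset\mathbb{T}^{d}$ embeds faithfully, I would average this identity over $y\in B_{t}$, obtaining
\[
\omega^{2}(f,t)\;\geq\;\frac{1}{|B_{t}|}\int_{B_{t}}\|f(\cdot+y)-f\|_{L^{2}}^{2}\,dy\;=\;4\sum_{\xi}m(\xi,t)\,|\widehat{f}(\xi)|^{2},
\qquad m(\xi,t):=\frac{1}{|B_{t}|}\int_{B_{t}}\sin^{2}(\pi\xi\cdot y)\,dy.
\]

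The main analytic step is to show that there exist absolute constants $c_{0},C_{0}>0$ such that $m(\xi,t)\geq c_{0}$ whenever $|\xi|\,t\geq C_{0}$. This follows from writing $m(\xi,t)=\tfrac12\bigl(1-|B_{t}|^{-1}\widehat{\chi_{B_{t}}}(\xi)\bigr)$ and invoking the Bessel-type decay of the Fourier transform of the indicator of a ball, which depends only on $|\xi|t$ and tends to zero as $|\xi|t\to\infty$. Granted this estimate, I conclude that $\omega^{2}(f,t)\gtrsim\sum_{|\xi|\geq C_{0}/t}|\widehat{f}(\xi)|^{2}$ for every admissible $t$. I expect this Bessel decay fact to be the only non-bookkeeping step.

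Now fix an integer $j_{0}$ with $2^{j_{0}}\geq C_{0}$. For each $k\geq j_{0}$, taking $t=2^{-(k-j_{0})}$ gives
\[
\sum_{\xi\in A_{k}}|\widehat{f}(\xi)|^{2}\;\leq\;c_{0}^{-1}\,\omega^{2}\bigl(f,2^{-(k-j_{0})}\bigr).
\]
The finitely many frequencies with $0<|\xi|<2^{j_{0}}$ are absorbed by the same inequality at the single scale $t=1/2$, and the zero mode contributes nothing because $\log^{\alpha}(1)=0$ (for $\alpha>0$; the degenerate case $\alpha=0$ is a counting identity controlled by $\|f\|_{2}^{2}$).

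Using $\log^{\alpha}(1+|\xi|)\lesssim 1+k^{\alpha}$ on $A_{k}$ and inserting the shell estimate,
\[
\sum_{\xi}\log^{\alpha}(1+|\xi|)\,|\widehat{f}(\xi)|^{2}\;\lesssim\;\sum_{k=0}^{\infty}(1+k^{\alpha})\sum_{\xi\in A_{k}}|\widehat{f}(\xi)|^{2}\;\lesssim\;\sum_{k=j_{0}}^{\infty}(1+k^{\alpha})\,\omega^{2}\bigl(f,2^{-(k-j_{0})}\bigr).
\]
Reindexing $j=k-j_{0}$ and using $(1+(j+j_{0})^{\alpha})\lesssim 1+j^{\alpha}$ produces the claimed bound $c\sum_{j\geq 0}(1+j^{\alpha})\omega^{2}(f,2^{-j})$. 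Aside from the lower bound on $m(\xi,t)$, which is the only delicate point, the remainder is a routine dyadic decomposition with an index shift.
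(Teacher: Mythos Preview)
Your argument is correct. Both your proof and the paper's reduce to the same dyadic shell estimate
\[
\sum_{2^{k}\leq|\xi|<2^{k+1}}|\widehat f(\xi)|^{2}\;\lesssim\;\omega^{2}(f,2^{-k}),
\]
after which the logarithmic weight and the reindexing are routine. The difference lies in how that shell estimate is obtained. You average the Parseval identity over a Euclidean ball $B_t$ and invoke the decay of $|B_t|^{-1}\widehat{\chi_{B_t}}(\xi)$ (a Bessel-function fact) to get a uniform lower bound on $m(\xi,t)$ once $|\xi|t$ is large. The paper instead argues coordinate-by-coordinate: it tests a single translation $y=(2^{-j-2},0,\dots,0)$ and uses the elementary inequality $|e^{2\pi i\,2^{-j-2}h}-1|\geq c$ for $2^{j}\leq|h|<2^{j+1}$, then iterates over the $d$ coordinates to cover the whole shell $\{2^{j}\leq|\xi|_\infty<2^{j+1}\}$. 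The paper's route is more elementary---no special functions, no decay estimate for $\widehat{\chi_B}$, and no need for the fix-up of the low frequencies $0<|\xi|<2^{j_0}$ that your averaging introduces. Your route is more isotropic and perhaps more conceptual; it also makes the dependence on $|\xi|t$ transparent in one stroke rather than through $d$ separate coordinate tests. Either way the constants depend only on $d$ and $\alpha$.

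One small remark: the restriction $t<1/2$ ``so that $B_t\subset\mathbb T^d$'' is not actually needed, since $y\mapsto\|f(\cdot+y)-f\|_2^2$ is a $\mathbb Z^d$-periodic function on $\mathbb R^d$ and can be averaged over any ball; dropping it would spare you the separate treatment of low frequencies. Also, for $\alpha=0$ the stated inequality fails for constant $f$ (the left side is $|\widehat f(0)|^2$ while the right side vanishes); this defect is already present in the statement and is irrelevant for the application, where only $\alpha>0$ is used.
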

\begin{proof}
One has 
\begin{align*}
\sum_{\xi \in \mathbb{Z}^{d}}\log ^{\alpha }(1+|\xi|) | \widehat{f}(\xi)|^{2}  & \leq c \sum_{j=0}^{+\infty }\log ^{\alpha }( 1+2^{j+1}) 
\sum_{2^{j}\leq |\xi|_\infty <2^{j+1}}| \widehat{f}
(\xi)| ^{2}  .
\end{align*}

It then suffices to show that 
\[
\sum_{2^{j}\leq |\xi|_\infty <2^{j+1}}| \widehat{f}
(\xi)| ^{2}\leq c\ \omega ^{2}( f,2^{-j}), \quad \forall j \geq 0.
\]

This inequality is well-known, but it is easier to give a proof than a
reference. Parseval's identity gives 
\[
\int_{\mathbb{T}^{d}}| f(x+y)-f(x)|
^{2}dx=\sum_{\xi \in \mathbb{Z}^{d}}| e^{2\pi i\xi y}-1|
^{2}| \widehat{f}(\xi)| ^{2}.
\]

Write $\xi =(h,k) $, with $h\in \mathbb{Z}$ and $k\in \mathbb{Z}%
^{d-1}$, and take $y= ( 2^{-j-2},0) $. Then $\xi y=2^{-j-2}h$, and 
\begin{align*}
\sum_{\substack{(h,k) \in \mathbb{Z\times Z}^{d} \\ 2^{j}\leq |h|_\infty <2^{j+1} }}| \widehat{f}(\xi)| ^{2} &
\leq c \sum_{\substack{(h,k) \in \mathbb{Z\times Z}^{d} \\ 2^{j}\leq |h|_\infty <2^{j+1} }}| e^{2\pi i2^{-j-2}h}-1|
^{2}| \widehat{f}(\xi)| ^{2} 
& \leq c \omega ^{2}( f,2^{-j-2}) 
&\leq c \omega ^{2}(f,2^{-j}).
\end{align*}

Iterating for each of the $d$ coordinates of $\xi$, one obtains 
\[
\sum_{2^{j}\leq |\xi| _{\infty }<2^{j+1}}| 
\widehat{f}(\xi)| ^{2}\leq c \omega ^{2}( f,2^{-j}) .
\]
\end{proof}

It is interesting to observe that, using the above proposition, Corollary \ref{cor:A} can be applied whenever $f$ is the characteristic function of a domain with a fractal boundary with a minimally regular geometry. More precisely, let $\Omega \subseteq \bT^d$ be a Borel measurable set, and suppose that there exists $\varepsilon>0$ such that 
\[
|\{x\in \bT^d: \dist(x,\partial \Omega) \leq t  \} | \leq c \big( \log 1/t \big)^{-2-\varepsilon}, \,\,\text{for all} \,\,\, 0<t<1/2.
\]
Notice that this is an assumption on the Minkowski content of $\partial\Omega$. Then, for $0<\delta<\varepsilon$,
\begin{align*}
    \sum_{j=0}^{+\infty} (1+j^{1+\delta})\omega^2(\chi_\Omega,2^{-j}) & = \sum_{j=0}^{+\infty} (1+j^{1+\delta})\sup_{|y| \leq 2^{-j}} \int_{\bT^d}|\chi_{\Omega}(x+y) - \chi_\Omega(x)|^2 dx \\ 
    & \leq \sum_{j=0}^{+\infty} (1+j^{1+\delta}) |\{x\in \bT^d: \dist(x,\partial \Omega) \leq 2^{-j}  \} |  \\ 
    & \leq c \sum_{j=0}^{+\infty}(1+j^{\delta-\varepsilon-1}) < +\infty.
\end{align*}
Hence, by Proposition \ref{prop:continuity}, we have that 
\[ \sum_{\xi \in \bZ^d } \log^{1+\delta}(1+|\xi|)|\widehat{\chi_\Omega}(\xi)|^2 < + \infty. \]

Explicitly, from Corollary \ref{cor:A} we obtain that, for every matrix $A$ satisfying \eqref{svp}, for every $\eta>0$ and for almost every $x\in\mathbb T^d$, there exists $C>0$, depending on $x, \eta, $ and $A$, such that for every $N\in \bN$
 \begin{equation}\label{discrepancy}
   \Big|\frac{1}{N}\sum_{n=0}^{N-1} \chi_{\Omega}(A^n x) - |\Omega| \Big| \leq C N^{-\frac 12 } \log N ^{\frac 32 + \eta}.
   \end{equation}
It is interesting to compare the above estimate with some results in \cite{BCT}.  In particular, in \cite[Corollary 4]{BCT} it is proved that if $\Omega$ is such that for some $\beta>0$ and for every $t>0$, sufficiently small,  one has \[
|\{x\in \bR^d : \dist(x,\partial \Omega)  \leq t \}|\leq ct^{\beta}
\]
and if $\Omega$ satisfies some other mild technical assumptions, then  there exists a constant $c>0$ such that for every distribution of points $\{p_n\}_{n=0}^{N-1}$ there exists an affine copy $\widetilde{\Omega}$ of $\Omega$ which satisfies the estimate
\[
\Big|\frac{1}{N}\sum_{n=0}^{N-1} \chi_{\widetilde{\Omega}}(p_n) - |\tilde{\Omega}| \Big|\geq cN^{-\frac{1}{2}-\frac{\beta}{2}}.
\]
Moreover, in \cite[Theorem 12]{BCT}, it is proved that, under the same hypothesis on $\Omega$, there exists $c>0$ such that for every $N$ there exists a distribution of $N$ points $\{p_n\}_{n=1}^{N}$ such that
\[
\Big|\frac{1}{N}\sum_{n=0}^{N-1} \chi_{\widetilde{\Omega}}(p_n) - |\widetilde{\Omega}| \Big|\leq cN^{-\frac{1}{2}-\frac{\beta}{2}},
\]
for ``many'' affine copies $\widetilde{\Omega}$ of $\Omega$.
We point out that in this last estimate the single set of points $\{p_n\}_{n=0}^{N-1}$ depends on $\Omega$ and $N$, while, in our estimate \eqref{discrepancy} the underlying sequence of points is fixed. However, notice that the numerology in our upper bound \eqref{discrepancy}  tends to coincide, up to a logarithmic transgression, with the upper bound in \cite[Theorem 12]{BCT} when $\beta  $ tends to $0$.

\bibliographystyle{amsalpha}
\bibliography{biblio}
\end{document}